\documentclass{article} 
\usepackage{iclr2027_conference,times}

\usepackage{amsmath,amsfonts,bm}

\def\eqref#1{equation~\ref{#1}}

\def\1{\bm{1}}

\DeclareMathAlphabet{\mathsfit}{\encodingdefault}{\sfdefault}{m}{sl}
\SetMathAlphabet{\mathsfit}{bold}{\encodingdefault}{\sfdefault}{bx}{n}

\usepackage{hyperref}
\usepackage{url}

\title{Stability of the Monge Map in Semi-Dual Optimal Transport}

\author{Anton Selitskiy 
\\
Dept. of Electrical and Computer Engineering\\
Stony Brook University\\
Stony Brook, NY 11790, USA \\
\texttt{anton.selitskiy@stonybrook.edu} \\
\And
David Millard\\
Dept. of Electrical and Computer Engineering \\
University of Rochester \\
Rochester, NY 14627, USA \\
\texttt{dmillard@ur.rochester.edu} \\
}

\usepackage{hyperref}       
\usepackage{url}            
\usepackage{booktabs}       
\usepackage{nicefrac}       
\usepackage{microtype}      
\usepackage{xcolor}         

\usepackage{graphicx}
\usepackage{amsmath}
\usepackage{amsthm}
\usepackage{amssymb}
\usepackage{subcaption}

\newtheorem{theorem}{Theorem}

\newtheorem{remark}{Remark}

\iclrfinalcopy 
\begin{document}

\maketitle

\begin{abstract}
This paper shows that the semi-dual formulation of the optimal transport problem has a degenerate saddle-point structure, and that its numerical solution is equivalent to solving a constrained optimization problem. We derive necessary and sufficient conditions for the convergence of Monge maps without requiring optimality of the dual potential. This analysis helps explain why, in practice, numerical algorithms often require more iterations to update the transport map than the potential.
\end{abstract}

\section{Introduction}

Recently, optimal transport (OT) has become a powerful tool in machine learning, statistics, and signal processing. From a theoretical perspective, many modern generative models --- including variational autoencoders (VAEs), generative adversarial networks (GANs), diffusion models, and flow-based models --- can be interpreted as approximations of transport processes between probability distributions. For instance, GANs can be viewed as learning a map that transports a simple latent distribution (e.g., uniform or Gaussian) to complex data distribution (e.g., a highly nonlinear data generating distribution). Similarly diffusion models describe stochastic flows that gradually transform one distribution into another. These connections suggest that \emph{optimal transport provides a unifying mathematical framework for generative modeling}, where learning corresponds to approximating transport maps or transport dynamics between distributions.

This work is motivated by neural optimal transport (NOT) algorithm introduced in \citet{rout2021generative}, which is closely related to the $(2,2)$-WGAN from \cite{mallasto2019qpwgan} and other WGAN formulations. Later, NOT algorithm was extended to weak formulations in \citet{korotin2023neural}. These approaches rely on the semi-dual formulation of optimal transport\footnote{This formulation is already present in the seminal work of \citet{Kan04Transl}; the optimization itself is stated explicitly in \citet{refId0}, while the term   “semi-dual” is introduced in \citet{cuturi2016semidual}.} together with amortized optimization techniques introduced in \citet{threeplayergan}, later interpreted as measurable selection mechanisms in the theory of integrands. 

Altogether, these works exploit the existence (and, for quadratic cost, uniqueness) of optimal transport maps, but do not provide convergence guarantees for the learned maps. Many  papers containing convergence estimates implicitly assume that the transport map exists and that the optimization problem is well-posed; however, these assumptions are mathematically nontrivial and may fail in practice, leading to misleading or impractical conclusions (see Sec.~\ref{sec:discussion}).

In this paper, we show that  the semi-dual optimal transport problem possesses a degenerate saddle-point structure: at the optimal transport map, the objective functional becomes independent of the potential.  We provide an estimate  of the transport map which is independent of the potential:
\begin{equation}\label{eq:intro}
\|T - T^\star\|_{L_2}^2 \lesssim  {\left|\mathbb{E}[c(x,T(x))] - W_2(\mu,\nu)^2\right|} + d_{KR}(T_{\sharp}\mu,\nu).
\end{equation}



While many works assess convergence through the approximation gap and the discrepancy between generated and target distributions, to the best of our knowledge such estimates have not been formulated explicitly in the form of \eqref{eq:intro}.

Early works on the numerical solution of the semi-dual OT problem approximated the conjugate potential using an inner optimization loop \cite{SemiGAN}. Later, with neural network parametrizations it was observed that performing multiple updates of the transport map leads to better performance \cite{makkuva2020icnn}. We provide a theoretical explanation for this phenomenon by interpreting the problem as a constrained optimization task and relating it to a continuous-time two-timescale dynamical system.

\section{Optimal Transport Problem}
\label{Sec:OT}

\subsection{Monge Formulation} 
We will assume that $X$ is a complete separable metric space. Such spaces are referred to as Polish spaces. In practice, we consider either $X=\mathbb{R}^n$ or a compact subset of $\mathbb{R}^n$.
By $\mathcal{P}(X)$ we denote the space of probability measures defined on the Borel $\sigma$-algebra $\mathcal{B}(X),$ i.e., the smallest $\sigma$-algebra generated by open subsets of $X.$ $\mathcal{P}_p(X)$ denotes subspace of $\mathcal{P}(X)$ having $p$th moments.

Assume that $Y$ is a Polish space and a lower semicontinuous (l.s.c.)\footnote{Continuous functions are l.s.c.} cost function is defined:
 $c\colon X\times Y\to [0,\infty).$   
Later, we assume that $X$ and $Y$ are compact sets in $\mathbb{R}^n$ endowed with metric $d(x,y)$ and cost
\begin{equation}\label{p_dist}
    c(x,y) = \frac{1}{p}d(x,y)^p. 
\end{equation}
Letting $\mu\in\mathcal{P}(X)$ and $\nu\in\mathcal{P}(Y)$ be probability measures we can consider two probability spaces $(X,\mathcal{X},\mu)$ and $(Y,\mathcal{Y},\nu)$ with $\mathcal{X}=\mathcal{B}(X)$ and $\mathcal{Y}=\mathcal{B}(Y).$
For a $\mathcal{X}|\mathcal{Y}$-measurable function $T\colon X\to Y,$ the measure $\nu$   such that $\nu(B) = \mu (T^{-1}(B))$ for all $B\in\mathcal{Y}$ is called a push-forward measure of $\mu$ by $T$.  In this case, the following notation is used: $\nu = T_{\sharp}\mu\ (= \mu\circ T^{-1})$. 

Denote by $T(\mu,\nu)$ the space of measurable (Borel) functions $T\colon X\to Y,$ for which $\nu$ is a push-forward measure of $\mu.$  Elements of $T(\mu,\nu)$ will be referred to as \emph{transport maps}.


    The Monge problem is to find a transport map $T^{*}\colon X\to Y$ that minimizes the transport cost 
\begin{equation}\label{monge_problem}
    MP(\mu,\nu,c) = \underset{T\in T(\mu,\nu)}{\inf} \int\limits_{X} c(x,T(x))d\mu(x).
\end{equation}
If the infimum is reached on some function $T^{*},$ then $T^{*}$ is called \textit{the optimal transport map.} 
Sometimes, the class $T(\mu,\nu)$ is empty (see Table~\ref{tab:existence}).

\subsection{Kantorovich Formulation} %
\label{sec:ch1_Kant_form}

Denote by $\Pi(\mu,\nu)$ the set of probability measures $\pi$ on the product space $(X\times Y,\mathcal{X}\otimes\mathcal{Y})$ with prescribed marginal distributions:
$ 
\pi(A\times Y)=\mu(A),$ and $ 
\pi(X\times B)=\nu(B),
\quad \forall A\in\mathcal{X},\ B\in\mathcal{Y}.
$ 
 Measures $\pi\in\Pi(\mu,\nu)$ are called \textit{transport (or transportation) plans}.\footnote{
The set $\Pi(\mu,\nu)$ is nonempty; for instance it contains the product measure $\mu\otimes\nu$.}


The Kantorovich problem consists in finding a transport plan $\pi^{*}$ that minimizes the total transport cost 
\begin{equation}\label{kant_fofm}
KP(\mu,\nu,c)
=
\inf_{\pi\in\Pi(\mu,\nu)}
\int_{X\times Y} c(x,y)\,d\pi(x,y).
\end{equation}
The integral can be written as 
$
\mathbb{E}[c(\xi,\eta)],
$
where $(\xi,\eta)\sim\pi$. A plan $\pi^{*}$ at which the minimum is attained is called an \textit{optimal transport plan}. Under  assumptions above\footnote{For unbounded domain additionally $
 c(x,y)\le a(x)+b(y)
 $ for some integrable functions $a\in L_1(\mu)$ and $b\in L_1(\nu).$} the infimum in \eqref{kant_fofm} is actually attained \cite[Theorem~5.10~(iii)]{villani:book}. 





\subsection{Dual Problem}
\label{Sec:dual}

Problem~\eqref{kant_fofm} can be reformulated as a maximization problem (for generality we write supremum, but it is attained under very general assumptions:
\begin{equation}\label{dp_general}
    DP(\mu,\nu,c) = \underset{\substack{\varphi\in C(X), \psi\in C(Y)\\\varphi(x)+\psi(y)\leq c(x,y)}}{\sup}\left(\int\limits_{X}\varphi d\mu + \int\limits_{Y}\psi d\nu\right)
\end{equation}
where $X$ and $Y$ are compact sets, in general case $\varphi$ and $\psi$ should be absolutely integrable.

Under our assumptions (see, e.g., \cite[Theorem~5.10~(iii)]{villani:book}),
$ 
    DP(\mu,\nu,c) = KP(\mu,\nu,c).
$ 

\subsection{Semi-Dual Problem}
For  functions $\varphi\colon X\to \mathbb{R}\cup\{\pm\infty\}$ and $\psi\colon Y\to \mathbb{R}\cup\{\pm\infty\}$ define $c$\textit{-transform}
\begin{equation}
    \varphi^c(y) = \underset{x\in X}{\inf} [c(x,y) - \varphi(x)],\qquad \psi^c(x) = \underset{y\in Y}{\inf} [c(x,y) - \psi(y)].
\end{equation}
    A function $\varphi(x)$ is said to be $c$-concave, if there exists a function $\psi(y),$ such that $\varphi(x) = \psi^c(x).$
For $X$ and $Y$ bounded subsets in $\mathbb{R}^n$ the dual problem of optimal transport can be formulated as semi-dual (see):\footnote{For unbounded domains, we should assume $\psi \in L_1(d\nu).$} 
\begin{equation}
\label{eq:dual}
SDP(\mu,\nu,c) = \sup_{\psi \in C(Y)} 
\left( \int_X \psi^c(x)\, d\mu + \int_Y \psi(y)\, d\nu \right).
\end{equation}

\subsection{Connection between the Monge and Kantorovich Problems}

The following theorem establishes the relation between the Monge and Kantorovich formulations of optimal transport.

\begin{theorem}[\cite{Pratelli2007}]
Let $X$ and $Y$ be Polish spaces and let $\mu\in\mathcal P(X)$ and $\nu\in\mathcal P(Y)$ be probability measures. 
Assume that $\mu$ is nonatomic and that the cost function 
$c:X\times Y\to[0,\infty)$ is continuous. Then  $MP(\mu,\nu,c) = KP(\mu,\nu,c).$
\end{theorem}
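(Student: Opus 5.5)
The inequality $KP\le MP$ is immediate: every transport map $T\in T(\mu,\nu)$ induces the plan $\pi_T=(\mathrm{id},T)_\sharp\mu\in\Pi(\mu,\nu)$, and $\int c\,d\pi_T=\int_X c(x,T(x))\,d\mu(x)$. The content of the theorem is therefore the reverse inequality $MP\le KP$. Since $\Pi(\mu,\nu)$ contains optimal plans (the infimum in \eqref{kant_fofm} is attained), it suffices to show the following. For every $\pi\in\Pi(\mu,\nu)$ and every $\varepsilon>0$ there is a map $T\in T(\mu,\nu)$ with $\int c(x,T(x))\,d\mu\le\int c\,d\pi+\varepsilon$. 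In other words, the graphs of maps are dense in $\Pi(\mu,\nu)$ in a sense strong enough to pass the cost to the limit.

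\textbf{Step 1 (reduction to compact, continuous-cost setting).} By tightness of $\mu$ and $\nu$ on Polish spaces, choose compacts $K_X,K_Y$ carrying all but $\delta$ of the mass. Since $c\ge0$ is continuous, it is uniformly continuous and bounded on $K_X\times K_Y$. The small leftover mass needs separate handling, which matters when $\int c\,d\pi<\infty$ but $c$ is unbounded. I would handle it by estimating the cost of the tail with an integrable bound, or by truncating $c$ to $c\wedge M$ and using monotone convergence. Note that $MP$ is defined with the same cost, so truncation requires care.

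\textbf{Step 2 (discretization).} Partition $K_Y$ into finitely many Borel cells $B_j$ of diameter $<\delta$ and $K_X$ into cells $A_i$ of diameter $<\delta$. Set $m_{ij}=\pi(A_i\times B_j)$. Since $\mu$ is nonatomic, the measure $\mu|_{A_i}$ can be split into disjoint Borel pieces $A_{ij}\subset A_i$ with $\mu(A_{ij})=m_{ij}$ (Lyapunov/Sierpiński: nonatomic measures take all intermediate values). On each $A_{ij}$ I need a map pushing $\mu|_{A_{ij}}$ exactly onto $\nu_{ij}$, the $Y$-marginal of $\pi|_{A_i\times B_j}$, which has mass $m_{ij}$ and lives on $B_j$.

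\textbf{Step 3 (exact matching on pieces).} For this I invoke the classical fact that between a nonatomic Borel probability measure on a Polish space and any Borel probability measure on a Polish space there is a Borel map pushing one to the other. One proof is to map $(A_{ij},\mu)$ isomorphically to $([0,1],\text{Leb})$ by the Borel isomorphism theorem for atomless standard spaces, then compose with a quantile-type map $[0,1]\to Y$ given by a Borel isomorphism of $Y$ with a subset of $\mathbb R$. Gluing these pieces defines $T\in T(\mu,\nu)$ exactly. For $x\in A_{ij}$ we have $T(x)\in B_j$, and $\pi$ also charges $A_i\times B_j$ with the same mass. By uniform continuity, $|c(x,T(x))-c(x',y')|\le\omega(2\delta)$ for $(x',y')\in A_i\times B_j$, so the cost difference on the compact part is at most $\omega(2\delta)$.

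\textbf{Step 4 (conclusion and main obstacle).} Sending $\delta\to0$ gives $MP\le KP$. I expect the main obstacle to be the tails in Step 1. A map is needed on all of $X$ with exact push-forward $\nu$, including the non-compact mass, while controlling a possibly unbounded continuous cost there. I would first try to treat the tail cells $A_i\times B_j$ involving $X\setminus K_X$ or $Y\setminus K_Y$ exactly as in Steps 2–3, matching masses exactly. Their cost would then be controlled by choosing the partition of the tail adaptively, so that $c$ varies by at most a relative factor on each cell: take a countable partition by level sets of $c$ together with small balls. This gives $\int c(x,T(x))\,d\mu\le\int c\,d\pi+\varepsilon$ without needing boundedness. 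If the cells must be countable, the splitting in Step 2 still works by countable additivity.
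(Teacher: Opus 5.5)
Your proposal has a genuine gap in the unbounded-cost case, which is exactly where you flagged it, and the remedy you sketch does not close it. The paper gives no proof of this statement and only cites Pratelli, so the comparison below is with a standard argument, not with the paper.

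\textbf{What already works, and the gap.} Your reduction to $MP\le KP$, the splitting of $A_i$ by nonatomicity, and the exact matching in Step~3 are correct. In fact Steps~1--3 already prove the theorem when $c$ is bounded. The tail cells carry $\pi$-mass at most $\mu(X\setminus K_X)+\nu(Y\setminus K_Y)\le 2\delta$, so their cost is at most $2\delta\sup c$. (Choose the cell diameter only after fixing $K_X,K_Y$, since $\omega$ depends on them.) For unbounded $c$ the argument fails for two reasons.

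\emph{Level sets are not grid cells.} Steps~2--3 control the cost only through $\sup_{A_i\times B_j}c$, and they require a grid: one partition $\{A_i\}$ of $X$ and one $\{B_j\}$ of $Y$. This is because an arbitrary matching $A_{ij}\to B_j$ only guarantees $(x,T(x))\in A_i\times B_j$. Level sets of $c$ are not of this form.

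\emph{No grid suffices.} A single partition of $X$ must serve every $B_j$, while the scale on which $c(\cdot,y)$ oscillates can shrink as $y$ leaves compacts. Then no grid keeps $\sum_{i,j}\pi(A_i\times B_j)\sup_{A_i\times B_j}c$ finite, even though $\int c\,d\pi<\infty$. Here is an example:
\begin{itemize}
\item Let $X=[0,1]$ with Lebesgue $\mu$. Let $\nu$ give mass $2^{-k}$ uniformly to $[k-\tfrac18,k+\tfrac18]$, and set $\pi=\mu\otimes\nu$.
\item Let $c(x,y)=\sum_k \tfrac{2^k}{k}\phi_k(x)\chi(y-k)$. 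Here $\chi\in C_c(-\tfrac14,\tfrac14)$ equals $1$ on $[-\tfrac18,\tfrac18]$.
\item Each $\phi_k$ is a continuous bump equal to $1$ on a finite union of intervals $S_k$, with $\int\phi_k\le 2|S_k|$. The $S_k$ are independent with $|S_k|\asymp 1/k$ (use disjoint blocks of binary digits).
\item Then $\int c\,d\pi\lesssim\sum_k k^{-2}<\infty$.
\item Take $\mu(A)>0$ and $J=\{k: A\cap S_k=\emptyset\}$. Independence gives $0<\mu(A)\le\prod_{k\in J}(1-|S_k|)$, so $\sum_{k\in J}1/k<\infty$.
\item Hence, for every partition $\{B_j\}$, $\sum_j\pi(A\times B_j)\sup_{A\times B_j}c\ge\mu(A)\sum_{k\notin J}1/k=\infty$.
\end{itemize}
Truncating to $c\wedge M$ does not help either, since it only bounds $MP(\mu,\nu,c\wedge M)$.

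\textbf{The missing idea: drop the grid and split $\mu$ inside overlapping $X$-sides.}
\begin{enumerate}
\item Since $c$ is continuous and finite, every point of $X\times Y$ has an open rectangle on which $\operatorname{osc} c<\varepsilon$. By Lindel\"of, countably many such rectangles cover $X\times Y$.
\item Disjointify them, using that a difference of rectangles is a finite disjoint union of rectangles. This gives disjoint Borel rectangles $Q_k=A_k\times B_k$ covering $X\times Y$, with $\operatorname{osc}_{Q_k}c<\varepsilon$. The sides $A_k$ may overlap.
\item Now find disjoint Borel sets $E_k\subset A_k$ with $\mu(E_k)=\pi(Q_k)$ and $\mu(X\setminus\bigcup_k E_k)=0$. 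The densities of the $X$-marginals of $\pi|_{Q_k}$ show that the following set is nonempty: families $(g_k)\subset L^\infty(\mu)$ with $0\le g_k\le\mathbf 1_{A_k}$, $\sum_k g_k\le 1$ and $\int g_k\,d\mu=\pi(Q_k)$. This set is convex and weak$^*$-compact.
\item It has an extreme point by Krein--Milman, and every extreme point consists of indicators $\mathbf 1_{E_k}$. Indeed, suppose some $g_k$ is fractional on a set of positive measure. Since $\sum_k g_k=1$ a.e., some $g_{k'}$ is fractional on a positive-measure part of it. Nonatomicity then lets you shift a zero-mean perturbation between $g_k$ and $g_{k'}$, so the point is not extreme.
\item Apply your Step~3 to $\mu|_{E_k}$ and the $Y$-marginal of $\pi|_{Q_k}$. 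This yields $T$ with $T_\sharp\mu=\nu$ and $\int c(x,T(x))\,d\mu\le\sum_k\pi(Q_k)\sup_{Q_k}c\le\int c\,d\pi+\varepsilon$.
\end{enumerate}
This route needs no compactness, no tail estimate, and no boundedness of $c$.
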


The theorem states that the infimum value of the Monge problem coincides with the minimum value of the Kantorovich problem. In particular, although the Monge problem may fail to admit a minimizing map, it is possible to approximate the optimal transport plan by transport maps  whenever the source measure is nonatomic.
In Table~\ref{tab:existence}, we summarize the theoretical results on the existence of the optimal transport map according to \cite{BogKol12}.

\begin{table}[h]
\centering
\setlength{\tabcolsep}{5pt}
\renewcommand{\arraystretch}{1.1}
\caption{Existence and equivalence properties in optimal transport under different assumptions.}
\label{tab:existence}
\begin{tabular}{l|c|c|c|c|c|c}
\toprule
$\mu,\nu$ & $c(x,y)$ & $\pi^\star$ & $\varphi^\star,\psi^\star$ & $T^\star$ & $KP=DP$ & $KP=MP$ \\
\midrule
general & continuous & $\exists$ & $\exists$ & sometimes & yes & $\leq $ \\
continuous & l.s.c. & $\exists$ & $\exists$ & sometimes & yes & $\leq $ \\
$\mu$ non-atomic & continuous & $\exists$ & $\exists$ & sometimes & yes & yes \\
$\mu$ no $(n\!-\!1)$ mass & $\tfrac{1}{p}|x-y|^p,\ p>1$ & $\exists!$ & $\exists$ & $\exists!$ & yes & yes \\
continuous & $|x-y|$ & $\exists$ & $\exists$ & $\exists$ & yes & yes \\
\bottomrule
\end{tabular}
\end{table}

When  $X$ and $Y$ are  subsets of $\mathbb{R}^n$ and cost is quadratic $c(x,y) = \tfrac{1}{2}\|x-y\|^2$, the Kantorovich problem admits additional structure.\footnote{\cite{Bernier91} for compact sets and \cite{McCann95} in case of $X=Y=\mathbb{R}^n.$ } 

\begin{theorem}[Brenier--McCann]
\label{thm:monge}
Let $X$ and $Y$ be compact sets in $\mathbb{R}^n$ or $X=Y=\mathbb{R}^n,$ $c(x,y) = \tfrac{1}{2}\|x-y\|^2,$ and suppose $\mu$ vanishes on all sets of Hausdorff dimension at most $n-1$ (for example, if $\mu$ is absolutely continuous with respect to Lebesgue measure). Then there exists a Borel map $T\colon X\to Y$ such that $\nu = T_{\sharp}\mu.$ Moreover:
    (i) ~$T = \nabla u$ for some convex function $u$; 
    (ii) $T$ is $\mu$-a.e. unique;
    (iii) $T$ is the unique solution to the Monge problem.

If $\nu$ satisfies the same condition, then there exists an inverse map $S=\nabla v$, where $v$ is the Legendre--Fenchel dual of $u$, such that $S\circ T(x)=x$ and $T\circ S(y)=y$ almost surely.
\end{theorem}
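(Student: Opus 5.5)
The plan follows the classical route: Kantorovich duality, then $c$-cyclical monotonicity, then the Rademacher/Alexandrov differentiability argument. The first step rewrites the quadratic cost as $\tfrac12\|x-y\|^2=\tfrac12\|x\|^2+\tfrac12\|y\|^2-\langle x,y\rangle$. On compact $X,Y$ the terms $\tfrac12\|x\|^2$ and $\tfrac12\|y\|^2$ integrate to constants against any plan. Minimizing $KP$ is therefore equivalent to maximizing $\int\langle x,y\rangle\,d\pi$. Correspondingly, $c$-concave potentials $\varphi$ correspond to convex functions $u(x)=\tfrac12\|x\|^2-\varphi(x)$, and $c$-transforms become Legendre--Fenchel transforms.

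The second step takes an optimal plan $\pi^\star$, which exists by the attainment result quoted after \eqref{kant_fofm}. I then take optimal dual potentials from \eqref{dp_general}, which exist with $DP=KP$. By replacing $\varphi$ with $\varphi^{cc}$, I may assume they are a $c$-concave pair, so that $u$ is convex, finite on a neighborhood of the convex hull of $X$, and Lipschitz there. From $DP=KP$ and the constraint $\varphi(x)+\psi(y)\le c(x,y)$, the difference $c-\varphi-\psi$ is a nonnegative function with zero $\pi^\star$-integral. Hence $\pi^\star$ is concentrated on the set where equality holds. In convex language, this set is contained in the graph of the subdifferential: $y\in\partial u(x)$ for $\pi^\star$-a.e. $(x,y)$.

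The third step, which I expect to be the main obstacle, shows that $\partial u(x)$ is a singleton for $\mu$-a.e. $x$. By Rademacher's theorem a convex function is differentiable off a Lebesgue-null set, but the hypothesis here is finer: $\mu$ only vanishes on sets of Hausdorff dimension at most $n-1$. I would therefore invoke the sharper structure theorem for convex functions due to Zaj\'i\v{c}ek and Alberti. It says the non-differentiability set of a convex function on $\mathbb{R}^n$ is contained in a countable union of Lipschitz (indeed d.c.) hypersurfaces, each of Hausdorff dimension $n-1$, so it has $\mu$-measure zero. Hence $\partial u(x)=\{\nabla u(x)\}$ $\mu$-a.e. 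This gives $\pi^\star=(\mathrm{id},\nabla u)_\sharp\mu$, and with $T=\nabla u$ we get $T_\sharp\mu=\nu$, which proves existence and (i). For the case $X=Y=\mathbb{R}^n$ I would replace compactness with McCann's argument. There one uses cyclical monotonicity of $\operatorname{supp}\pi^\star$ and Rockafellar's theorem to produce $u$ directly, avoiding any integrability assumptions on the potentials.

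For (ii) and (iii), I argue as follows. Every optimal plan is concentrated on the graph of $\partial u$ for the same $u$, because any optimal pair of potentials works for all optimal plans. Therefore every optimal plan equals $(\mathrm{id},\nabla u)_\sharp\mu$, which gives uniqueness of $\pi^\star$. For any other transport map $S$, the plan $(\mathrm{id},S)_\sharp\mu$ has cost at least $KP$. If $S$ is optimal for $MP$, then $MP\le KP$ together with $MP\ge KP$ makes $(\mathrm{id},S)_\sharp\mu$ optimal, and so $S=\nabla u$ $\mu$-a.e.; uniqueness of $\nabla u$ among gradients of convex functions pushing $\mu$ to $\nu$ follows the same way. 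Finally, for the inverse statement I apply the same result to $(\nu,\mu)$ with the potential $v=u^*$. The Fenchel equality $y\in\partial u(x)\iff x\in\partial v(y)$ shows that $\nabla v\circ\nabla u=\mathrm{id}$ $\mu$-a.e. and $\nabla u\circ\nabla v=\mathrm{id}$ $\nu$-a.e.
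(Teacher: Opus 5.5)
The paper does not prove this statement. It quotes the theorem, with a footnote pointing to Brenier for compact sets and McCann for $\mathbb{R}^n$, so the comparison is with those sources. Your compact-case argument is the standard proof behind those citations: duality, concentration of $\pi^\star$ on the equality set, and the Zaj\'i\v{c}ek/Alberti covering of the non-differentiability set in place of Rademacher. That covering is exactly what the Hausdorff-dimension hypothesis is designed for. The argument is correct provided you take $u(x)=\sup_{y\in Y}[\langle x,y\rangle-\tfrac12|y|^2+\psi(y)]$ on all of $\mathbb{R}^n$. With this extension, $\varphi(x)+\psi(y)=c(x,y)$ gives $y\in\partial u(x)$ for the convex function on $\mathbb{R}^n$, not merely a supporting inequality on $X$. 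The inverse statement via $y\in\partial u(x)\iff x\in\partial u^*(y)$ is also fine, with one addition. Since $u^*$ is $+\infty$ off $\mathrm{conv}\,Y$, you also need that $\nu$ does not charge the $(n-1)$-dimensional set $\partial(\mathrm{conv}\,Y)$ in order to get $\nabla u^*$ $\nu$-a.e.

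Two steps need more than you give.

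\textbf{Uniqueness in (ii).} Uniqueness of $\nabla u$ among convex gradients does not follow ``the same way'' as (iii). There, optimality of $S$ was assumed. For a convex $\tilde u$ with $(\nabla\tilde u)_\sharp\mu=\nu$, you must prove that $(\mathrm{id},\nabla\tilde u)_\sharp\mu$ is optimal. In the compact case this is short:
\begin{itemize}
\item Since $\nabla\tilde u\in Y$ $\mu$-a.e., $\tilde u$ is $(\max_Y|y|)$-Lipschitz on a set of full $\mu$-measure.
\item Hence $\tilde u\in L^\infty(\mu)$ and $\tilde u^*\in L^\infty(\nu)$.
\item Integrating $\langle x,y\rangle\le\tilde u(x)+\tilde u^*(y)$ against an arbitrary $\pi\in\Pi(\mu,\nu)$ gives optimality.
\item Uniqueness of $\pi^\star$ then finishes the argument.
\end{itemize}

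\textbf{The case $X=Y=\mathbb{R}^n$.} Your uniqueness argument runs through optimal plans and optimal dual potentials, which exist only when the transport cost is finite. McCann's theorem assumes no moments, and in that setting:
\begin{itemize}
\item (iii) is vacuous;
\item the cyclically monotone plan is obtained by discrete approximation, not as a minimizer;
\item (ii) is proved by a separate Aleksandrov-type argument that does not use duality, and your proposal does not supply it.
\end{itemize}
If you add finite second moments, your route does go through. You would use duality with $L^1$ potentials, together with the fact that $\mu$ does not charge $\partial(\mathrm{dom}\,u)$.
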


This theorem  provides insights into situations in which the Monge map may fail to exist.
\cite[Theorem~5.10]{villani:book} says that the support\footnote{In case of a separable metric space $X$, \textit{support of a measure} $\mu\in\mathcal{P}(X)$ can be defined as the smallest closed set $B\subset X$ with $\mu(B)=1.$} of the optimal plan belongs to a $c$-superdifferential of the potential:
\begin{equation}
        \operatorname{sp}\mu\subset\partial^c\varphi = \left\{ (x,y)\in X\times Y: \varphi(x) + \varphi^c(y)= c(x,y)\right\}.
    \end{equation}
In case of quadratic cost, a $c$-concave function can be represented as $\varphi(x)=\frac{|x|^2}{2}-u(x),$ where $u$ is convex function. Because a convex function is Lipschitz, the potential $\varphi$ is Lipschitz, that means it is differentiable almost everywhere, and the $c$-superdifferential coincides with $\nabla\varphi$ at almost every point. Consequently, the support of the optimal plan $\operatorname*{sp}\pi$ is concentrated on the graph of $\nabla\varphi,$ up to a set of measure zero where the superdifferential may be multivalued. In this case, the set of points where the transport map is non-unique has zero measure. 

In contrast, if the set of points where $\varphi$ is not differentiable has positive measure, then the $c$-superdifferential is genuinely multivalued on a set of positive measure. As a result, the support of $\pi$ cannot be represented as the graph of a function, and therefore no Monge map exists.

\section{Towards a Minimax Formulation}
\label{Sec:continuous}

Given that many applications with bounded domains, e.g., in images the components belong to $[0,1]$ or $[0,255],$ from now we assume that $X$ and $Y$ are compact subsets in $\mathbb{R}^n.$ 

\subsection{Saddle-Point Formulation}

By interchange theorem (see \citet[Th. 14.60]{Rockafellar2009} or \citet{Rockafellar1976integral}), one can rewrite (\ref{eq:dual}) in a form that involves measurable selections: 
\begin{equation}
\label{sup_inf}
SDP(\mu,\nu,) = \sup_{\psi \in C(Y)}  
\left( \inf_{t \in \mathcal{M}(X,Y)}\int\limits_X \big[ c(x,t(x)) - \psi(t(x)) \big] d\mu + \int\limits_Y \psi(y)\, d\nu \right).
\end{equation}
Where $\mathcal{M}(X,Y)$ denotes all measurable functions from $X$ to $Y$. It can be replaced by $L_p$ if the integral converges ($p\geq 1$).

In the quadratic case $c(x,y) = \frac{1}{2}\|x-y\|^2$, one expects (and can show under the stated assumptions) that the infimum  in (\ref{sup_inf}) is attained at the optimal Monge map $t(x)=T^\star(x)$, while the supremum is attained at the Kantorovich potential $\psi^\star$. The following theorem formalizes it.

\begin{theorem}
\label{thm:not_clean}
Let $X$ and $Y$ be compact subsets in $\mathbb{R}^n$, and let $\mu,\nu$ be probability measures  both absolutely continuous with respect to the Lebesgue measure. Let the cost $c\colon X\times Y\to \mathbb{R}$ be l.s.c.
For measurable maps $t\colon X\to Y$ and $\psi\colon Y\to \mathbb{R}\ (\cup\{\pm\infty\}),$ such that $\psi \in C(Y),$ define
\begin{equation}\label{eq:min_max_func}
F(\psi,t)
=
\int\limits_X c(x,t(x))\,d\mu
+
\int\limits_Y \psi(y)\,d(\nu - t_\sharp \mu).
\end{equation}
Then:
\begin{enumerate}
    \item[(i)] The minimax value satisfies
\begin{equation}\label{eq:th15}
\sup_{\psi} \inf_{t} F(\psi,t)
=
\sup_{\psi}
\left(
\int_X \psi^c \, d\mu + \int_Y \psi \, d\nu
\right),
\end{equation}
which coincides with the Kantorovich dual problem.

\item[(ii)] For all $\psi$
$    \underset{t}{\inf} F(\psi,t)\leq \underset{t}{\inf} F(\psi^{cc},t).
$ 
with equality reached on $c$-concave functions $\psi$ only.
\item[(iii)]  If $t_{\sharp}\mu\ne \nu,$
\begin{equation}\label{eq:separ}
    \sup_{\psi} F(\psi,t) = +\infty.
\end{equation}

\item[(iv)] If in addition the cost 
 $   c(x,y)=\frac{1}{p} |x-y|^p,\quad 1< p<+\infty.
$ 
There exists an optimal potential $\psi^\star$ and a measurable map $T^\star$ such that for all $\psi$ and $t$
\begin{equation}
F(\psi^\star,t)
\ge
F(\psi^\star,T^\star)
=
F(\psi,T^\star).
\end{equation}
 Moreover,
$    T^\star(x) = \arg\underset{y}{\min} [c(x,y) - \psi^\star(y)] \quad \mu\text{-a.e.}
$ 
and $T^\star$ coincides with the Brenier optimal transport map from $\mu$ to $\nu$
\begin{equation}\label{eq:map_via_grad}
    T^\star(x) = x-|\nabla\psi^{\star c}(x)|^{q-2}\nabla\psi^{\star c}(x) \quad \mu\text{-a.e.}, \quad \frac{1}{q} = 1-\frac{1}{p}.
\end{equation}

\item[(v)] The map $T^\star$ is unique $\mu$-almost everywhere and is the unique minimizer of $t\mapsto F(\psi^\star,t).$  
\end{enumerate}

\end{theorem}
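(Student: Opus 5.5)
The plan is to rewrite $F$ so that the pointwise structure in $t$ becomes visible, and then use the interchange theorem and the Brenier--McCann theorem (Theorem~\ref{thm:monge}). By the change of variables $\int_Y\psi\,d(t_\sharp\mu)=\int_X\psi(t(x))\,d\mu$, we have
\begin{equation*}
F(\psi,t)=\int_X\big[c(x,t(x))-\psi(t(x))\big]\,d\mu+\int_Y\psi\,d\nu .
\end{equation*}

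\textbf{Item (i).} For fixed $\psi\in C(Y)$ the integrand $(x,y)\mapsto c(x,y)-\psi(y)$ is l.s.c. on the compact set $X\times Y$, hence a normal integrand bounded below. The interchange theorem (\citet[Th.~14.60]{Rockafellar2009}) therefore gives
\begin{equation*}
\inf_t\int_X\big[c(x,t(x))-\psi(t(x))\big]\,d\mu=\int_X\psi^c\,d\mu .
\end{equation*}
A minimizing measurable selection exists by the measurable selection theorem, because the infimum over the compact set $Y$ is attained. Taking $\sup_\psi$ gives \eqref{eq:th15}, and its identification with $DP=KP$ is the duality statement of Section~\ref{Sec:dual}.

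\textbf{Item (ii).} Here I use the standard $c$-transform facts: $\psi^{cc}\ge\psi$ and $\psi^{ccc}=\psi^c$. By (i), $\inf_tF(\psi,t)=\int\psi^c\,d\mu+\int\psi\,d\nu$, and $\inf_tF(\psi^{cc},t)=\int\psi^c\,d\mu+\int\psi^{cc}\,d\nu$. The inequality follows. Equality holds iff $\psi^{cc}=\psi$ $\nu$-a.e., which by continuity means on $\operatorname{sp}\nu$; this is the $c$-concavity of $\psi$, interpreted on the support.

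\textbf{Item (iii).} If $t_\sharp\mu\neq\nu$, the two finite Borel measures differ. Since $Y$ is compact metric, continuous functions separate them, so there is $\psi_0\in C(Y)$ with $\int\psi_0\,d(\nu-t_\sharp\mu)=\delta>0$. Then $F(\lambda\psi_0,t)=\int c(x,t(x))\,d\mu+\lambda\delta\to+\infty$ as $\lambda\to\infty$.

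\textbf{Item (iv).} Take a $c$-concave optimal potential $\psi^\star$; it exists by compactness (Arzel\`a--Ascoli on $c$-concave functions, which are uniformly Lipschitz since $c$ is Lipschitz on compacts). Let $T^\star$ be the optimal map given by Theorem~\ref{thm:monge} and its $p$-cost analogue from Table~\ref{tab:existence}. Because $T^\star_\sharp\mu=\nu$, the $\psi$-term vanishes, so $F(\psi,T^\star)=\int c(x,T^\star)\,d\mu=KP$, independent of $\psi$. For the inequality, (i) gives $F(\psi^\star,t)\ge\int\psi^{\star c}\,d\mu+\int\psi^\star\,d\nu=DP=KP$. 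The support of the optimal plan lies in $\partial^c\psi^{\star c}$, so $c(x,T^\star(x))-\psi^\star(T^\star(x))=\psi^{\star c}(x)$ $\mu$-a.e.; that is, $T^\star(x)\in\arg\min_y[c(x,y)-\psi^\star(y)]$. The gradient formula \eqref{eq:map_via_grad} comes from the first-order condition at points where $\psi^{\star c}$ is differentiable, which is $\mu$-a.e. by Rademacher's theorem and absolute continuity of $\mu$. By the envelope argument, $\nabla\psi^{\star c}(x)=\nabla_xc(x,T^\star(x))=|x-T^\star|^{p-2}(x-T^\star)$. Inverting $z\mapsto|z|^{p-2}z$, whose inverse is $w\mapsto|w|^{q-2}w$, gives the formula.

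\textbf{Item (v).} Any minimizer $t$ of $F(\psi^\star,\cdot)$ must attain the pointwise minimum $\mu$-a.e., so $t(x)\in\arg\min_y[c(x,y)-\psi^\star(y)]$. At differentiability points the first-order condition forces $t(x)=x-|\nabla\psi^{\star c}|^{q-2}\nabla\psi^{\star c}$, so $t=T^\star$ $\mu$-a.e.

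The main obstacle is this last uniqueness step, together with justifying the envelope identity when the argmin might a priori be multivalued. I would use the fact that $\psi^{\star c}$ is semiconcave on compact sets: for $p=2$, $|x|^2/2-\psi^{\star c}$ is convex, and for general $p$ one uses local semiconcavity away from the diagonal, treating $x=y$ separately. Differentiability at $x$ then pins down a unique minimizer $y$ via strict convexity of $|\cdot|^p$.
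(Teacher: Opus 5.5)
Your proof is correct. For (i)--(iii) it coincides with the paper's: change of variables, the Rockafellar--Wets interchange theorem, $\psi^{ccc}=\psi^c$, and separation of distinct measures by a continuous function (the paper phrases this last step via Riesz representation). Your remark that equality in (ii) only forces $\psi=\psi^{cc}$ on $\operatorname{sp}\nu$ is more precise than the paper's unqualified ``$c$-concave''.

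The routes differ in (iv) and (v).

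\textbf{Item (iv).} The paper evaluates $F(\psi^\star,T^\star)=\int\psi^{\star c}\,d\mu+\int\psi^\star\,d\nu$ from the pointwise identity $c(x,T^\star(x))-\psi^\star(T^\star(x))=\psi^{\star c}(x)$, and then compares pointwise with an arbitrary $t$. You instead compare both sides with the common value $DP=KP$. For the gradient formula the paper simply cites Gangbo--McCann, whereas you derive it by the envelope argument.

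\textbf{Item (v).} The paper treats only the quadratic cost and appeals to Brenier's theorem. That is somewhat loose, because a minimizer of $F(\psi^\star,\cdot)$ is not known in advance to push $\mu$ to $\nu$. What is actually needed is that $\arg\min_y[c(x,y)-\psi^\star(y)]$ is a singleton $\mu$-a.e. Your first-order argument gives exactly this, and for all $1<p<\infty$.

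\textbf{Your flagged obstacle.} The difficulty you raise at the end is not real. Extend $\psi^{\star c}$ to all of $\mathbb{R}^n$ by the same infimum over $Y$. It is then locally Lipschitz, hence differentiable $\mu$-a.e. by Rademacher's theorem and absolute continuity of $\mu$. At such an $x$, every $y$ in the argmin makes $x$ a global minimizer of $c(\cdot,y)-\psi^{\star c}$. Therefore $\nabla\psi^{\star c}(x)=|x-y|^{p-2}(x-y)$; note that $c(\cdot,y)$ is $C^1$ even at $x=y$ when $p>1$. Injectivity of $z\mapsto|z|^{p-2}z$ then pins $y$ down. So multivaluedness is ruled out automatically at differentiability points, and you need neither semiconcavity nor a separate treatment of the diagonal.
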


The equality in \textit{(iv)} reveals a fundamental structural property of the formulation: flatness of the functional in the dual variable. More precisely, at the optimal transport map $t=T^\star,$ the value of the functional becomes independent of $\psi.$  This degeneracy closely parallels well-known instability phenomena in generative adversarial networks, where the discriminator may continue to change without affecting the objective once the generator approaches optimality. The works we survey either implicitly treat the saddle point as strict or use formulations that do not foreground the constrained-optimization structure (Section~\ref{sec:5.5}) making the degeneracy apparent; we are not aware of an explicit treatment elsewhere in the semi-dual optimal transport literature.

This observation has important consequences for existing convergence results. In particular, several works --- including \citet[Assumption~3]{fan2021neuralmonge_arxiv}, \citet[Theorem~4.3]{rout2021generative} (convexity
assumption), \citet[Theorem~4.6]{tarasov2026a}, and \citet[Lemma~1]{korotin2021tuning} --- rely on properties derived under the assumption that the potential $\psi$ is optimal. However, due to the flatness of the objective, optimality of $\psi$ is neither necessary nor enforced by the optimization dynamics once the transport map is close to optimal. As a result, convergence of the map can be achieved without convergence of the potential, as demonstrated in Section~\ref{sec:discussion}.

Connection (\ref{eq:map_via_grad}) is used in \citet{makkuva2020icnn}, \citet{korotin2019w2gn}, \citet{SemiGAN} and other works (see Table~\ref{tab:OT_GAN}). 

\section{Stability of transport maps}

\subsection{Convergence}

Recall the notion of the Kantorovich--Rubinstein distance in the space of probability measures $\mathcal{P}(X).$
\begin{equation}
    d_{KR}(\mu,\nu) = \sup_{\substack{\|\varphi\|\leq 1,\\ |\varphi(x)-\varphi(y)|\leq d(x,y)}} \left( \int\limits_{X}\varphi(x)\,d\mu(x) - \int\limits_{X}\varphi(x)\,d\nu(x)\right).
\end{equation}
Kantorovich introduced metric in $\mathcal{P}_1(X)$ as $W_1(\mu,\nu) = KP\bigl(\mu,\nu, d(x,y)\bigr)$ and using dual formulation (\ref{dp_general}) showed that
\begin{equation}
    W_1(\mu,\nu) = \sup_{ |\varphi(x)-\varphi(y)|\leq d(x,y)} \left( \int\limits_{X}\varphi(x)\,d\mu(x) - \int\limits_{Y}\varphi(y)\,d\nu(y)\right).
\end{equation}
It has been generalized to \textit{Kantorovich power metric}\footnote{The distance $W_1$ (denoted as $W$ in \cite{Kan04Transl}) 
was interpreted by Kantorovich as the minimal ``work'' required to transport mass. In the modern literature, the notation $W_p$  is often mistakenly referred to as the Wasserstein distance. This misattribution was mentioned in multiple publications including L.~Vasershtein's friend remark: ``It is especially ironic to find the Kantorovich metric called
the Vasershtein metric\ldots Vasershtein’s interesting article \cite{Vaserstein1969}\ldots does
contain in passing a definition of the Kantorovich metric\ldots  But there is no
definition of power metrics'' \cite{vershik2013long}.}  $W_p(\mu,\nu)^p = KP\bigl(\mu,\nu,\frac{1}{p}d(x,y)^p\bigr).$
Furthermore, $d_{KR}$ is the metric in $\mathcal{P}(X)$ and convergence in it is equivalent to  weak convergence. Similarly, $W_p$ is a metric in $\mathcal{P}_p(X)$ and convergence in it in addition to weak convergence guarantees the convergence of the first $p$ moments.

\begin{theorem}\label{th:conv} Let the assumptions of Theorem~\ref{thm:not_clean} hold, and $c(x,y)=d(x,y)^2/2.$   
In addition, assume that measures $\mu$ and $\nu$ have densities $p\in C^{0,\alpha}(X)$ and $q\in C^{0,\alpha}(Y)$  with $\alpha\in (0,1).$  Additionally, $p,\ q$ are bounded away from zero on $X,\ Y$, and $X,\ Y$ have $C^{2,\alpha}$ smooth boundaries and are uniformly convex. Assume that $t_k\in L^2(\mu)$ and the family $\{\psi_k\}$ is uniformly $L$-Lipschitz.\footnote{In practice, parametrization $t_{\theta_k}, \psi_{\phi_k}$ by neural networks with parameters $\theta_k$ and $\phi_k$ is used.} Then the following estimate is true
\begin{equation}
\|t_{k} - T^\star\|_{L^2(\mu)}^2 \leq C \left( \left|F(t_{k},\psi_{k}) - W_2(\mu,\nu)^2\right| + d_{KR}(t_{k\sharp}\mu,\nu)\right).
\end{equation}
\end{theorem}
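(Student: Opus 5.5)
The plan is to use the Caffarelli regularity theory, which is available under the stated hypotheses (Hölder densities bounded away from zero, uniformly convex $C^{2,\alpha}$ domains), to make the Brenier potential uniformly convex, and then compare $t_k$ to $T^\star$ through the semi-dual. By Caffarelli's global regularity, the Brenier map $T^\star=\nabla u$ from Theorem~\ref{thm:monge} has $u\in C^{2,\alpha}(\bar X)$ with $\lambda I\le D^2u\le \Lambda I$ for some $0<\lambda\le\Lambda$. Write $\varphi^\star=\tfrac12|x|^2-u$ and $\psi^\star=(\varphi^\star)^c=\tfrac12|y|^2-u^*$, where $u^*$ is the Legendre transform, so that $D^2u^*\le \lambda^{-1}I$. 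Since $\nabla u^*$ is Lipschitz on $Y$, the potential $\psi^\star$ is Lipschitz with some constant $L^\star$ depending only on the data.

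\textbf{Step 1 (strong convexity inequality).} For any $y\in Y$ and $\mu$-a.e.\ $x$, uniform convexity of $u$ (equivalently, $u^*$ having $\lambda^{-1}$-Lipschitz gradient) gives
\[
u(x)+u^*(y)-\langle x,y\rangle \;\ge\; \tfrac{\lambda}{2}\,|y-\nabla u(x)|^2 .
\]
Translated into the potentials, this reads $c(x,y)-\varphi^\star(x)-\psi^\star(y)\ge \tfrac{\lambda}{2}|y-T^\star(x)|^2$. Substituting $y=t_k(x)$ and integrating against $\mu$ yields
\[
\tfrac{\lambda}{2}\|t_k-T^\star\|^2_{L^2(\mu)} \le \int c(x,t_k)\,d\mu-\int\varphi^\star d\mu-\int\psi^\star\,d(t_{k\sharp}\mu).
\]
Using $\int\varphi^\star d\mu+\int\psi^\star d\nu=W_2^2$, which holds with the convention $W_2^2=KP$ for the cost $\tfrac12 d^2$, the right-hand side becomes
\[
\Bigl(\int c(x,t_k)\,d\mu - W_2^2\Bigr)+\int\psi^\star\,d(\nu-t_{k\sharp}\mu).
\]

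\textbf{Step 2 (converting to $F$ and $d_{KR}$).} Rewrite the first bracket as $F(\psi_k,t_k)-W_2^2-\int\psi_k\,d(\nu-t_{k\sharp}\mu)$. Then
\[
\tfrac{\lambda}{2}\|t_k-T^\star\|^2\le |F(\psi_k,t_k)-W_2^2|+\Bigl|\int(\psi^\star-\psi_k)\,d(\nu-t_{k\sharp}\mu)\Bigr|.
\]
The function $\psi^\star-\psi_k$ is $(L+L^\star)$-Lipschitz. Adding a constant does not change the integral against the zero-mass signed measure, so after subtracting its value at a point it is also bounded by $(L+L^\star)\operatorname{diam}Y$. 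Dividing by $M=\max(L+L^\star,(L+L^\star)\operatorname{diam}Y)$ makes it an admissible test function in the definition of $d_{KR}$ (which is symmetric under $\varphi\mapsto-\varphi$). Hence the last term is at most $M\,d_{KR}(t_{k\sharp}\mu,\nu)$, and the theorem follows with $C=\tfrac{2}{\lambda}\max(1,M)$.

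\textbf{Main obstacle.} The crux is Step 1: the quantitative strong convexity of $u$ on all of $X$ is exactly what makes the gap $c-\varphi^\star\oplus\psi^\star$ quadratically coercive around the graph of $T^\star$. Without it one only gets the qualitative uniqueness of Theorem~\ref{thm:not_clean}(v). I would cite Caffarelli's boundary regularity theorem directly rather than reprove it, and check that the stated hypotheses ($C^{0,\alpha}$ densities bounded below, uniformly convex $C^{2,\alpha}$ boundaries) match its assumptions. A minor point to verify is the normalization of $W_2^2$ relative to the factor $\tfrac12$ in the cost; any mismatch changes only the constant.
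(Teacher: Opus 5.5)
Your proof is the paper's proof: the integrated gap $c-\varphi^\star\oplus\psi^\star$ at $y=t_k(x)$ is exactly the paper's Step 2 quantity $F(\psi^\star,t_k)-F(\psi^\star,T^\star)$. Your Step 2 matches the paper's Steps 3--4, which swap $\psi^\star$ for $\psi_k$ at a cost of $M\,d_{KR}$; shifting by $g(y_0)$ instead of the midpoint constant is equivalent. The problem is Step 1. Its inequality rests on the wrong half of the Hessian bound, and with the constant $\lambda/2$ it is false.

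Set $y_0=\nabla u(x)=T^\star(x)$. Then $u(x)=\langle x,y_0\rangle-u^*(y_0)$ and $\nabla u^*(y_0)=x$, so the gap is the Bregman divergence of $u^*$:
\[
u(x)+u^*(y)-\langle x,y\rangle \;=\; u^*(y)-u^*(y_0)-\langle \nabla u^*(y_0),\,y-y_0\rangle .
\]
A lower bound by $|y-y_0|^2$ therefore needs strong convexity of $u^*$ on $Y$. That is, it needs $D^2u^*=(D^2u\circ\nabla u^*)^{-1}\ge \Lambda^{-1}I$, which comes from the \emph{upper} bound $D^2u\le\Lambda I$. It also needs $Y$ convex, so the Hessian bound can be integrated along segments. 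Uniform convexity of $u$ (equivalently, $\nabla u^*$ being $\lambda^{-1}$-Lipschitz) only gives $\frac{\lambda}{2}|x-\nabla u^*(y)|^2$. That controls $x-S^\star(t_k(x))$, not $t_k(x)-T^\star(x)$.

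Here is a counterexample inside the hypotheses. Take $\mu,\nu$ uniform on the balls of radius $1$ and $2$. Then $T^\star(x)=2x$, $u(x)=|x|^2$, and $\lambda=\Lambda=2$. On $Y$ one has $u^*(y)=|y|^2/4$, so the gap equals $\frac14|y-2x|^2$, which is not $\ge|y-2x|^2$ as you claim.

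The repair uses only what you already wrote down. Replace $\lambda/2$ by $1/(2\Lambda)$; this is the paper's relabelling in its footnote, where it calls $1/\Lambda$ the strong-convexity constant $\lambda$ of $v=u^*$. The rest of your argument then goes through unchanged with $C=2\Lambda\max(1,M)$. Your ``main obstacle'' remark should be corrected the same way. The coercivity comes from the $C^2$ upper bound on $u$ (strong convexity of $u^*$ on the convex set $Y$), not from uniform convexity of $u$ on $X$.
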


In practice, it is rarely possible to compute the exact Kantorovich distance. Instead, we rely on observable quantities derived from the optimization objective. 

We first observe that the functional $F$ consists of two components with distinct roles. Next we notice that the first term acts as a stabilizing force, favoring maps close to the identity, while the second term enforces the marginal constraint $t_\sharp\mu=\nu$  preventing collapse to the trivial solution. Finally, since the constraint is only enforced approximately, there is no guarantee that the transport cost reaches the optimal value at $F(t,\psi) = W_2(\mu,\nu)^2.$ Therefore, convergence cannot be assessed using a single quantity. Simultaneous improvement in both quantities provides empirical evidence of convergence toward the optimal transport map.

\begin{remark}
Some non-smooth domains can be considered in Theorem~\ref{th:conv}. For example, multidimensional cubes $X=Y=[0,1]^d$ due to the mirroring argument. 
\end{remark}

\begin{remark}\label{remk2}
    The reverse estimate is true under the same assumptions, i.e., from $t_{k}\to T^\star$ in $L^2(\mu)$ and $\psi_{k}$ is uniformly $L$-Lipschitz it follows that $F(t_{k},\psi_{k}) \to W_2(\mu,\nu)^2$ and $d_{KR}(t_{k\sharp}\mu,\nu)\to 0.$
\end{remark}



\subsection{Hyperparameters of the Numerical Algorithm}

In the NOT algorithm by \citet{korotin2021neuralot}, several gradient descent steps are performed to update $t$ before a single gradient ascent step is applied to $\psi.$ Statement~(iii) of
Theorem~\ref{thm:not_clean} provides insight into this design choice. 

During training, the constraint $t_\sharp\mu=\nu$ is not satisfied so the maximization problem is unbounded. Assume that $d(t_\sharp\mu)=\rho_t(y)dy$ and $d\nu=q(y)dy.$ If we look at the gradient,
$
    \nabla_{\psi}F = \rho_t - q
$, 
we see that the gradient ascent in $\psi$ causes an increase in that direction without any restoring force, due to absence of concavity of $F$ in $\psi.$ 
This can cause instability if the  variable $t$ is not sufficiently close to optimal value. On the other hand, performing of multiple steps of optimization in $t$ reduces both the transport cost and the constraint violation  $\|\rho_t-q\|,$ which results in smaller step in $\psi.$

This naturally leads to a two-timescale interpretation of the training dynamics: the primal variable $t$ evolves on a faster timescale, stabilizing the system, while the dual variable $\psi$ evolves more slowly to enforce the constraint (see Sec.~\ref{sec:discussion} for dynamical system analogy).

One step of update looks as follows:
\begin{equation}
    \begin{split}
        &t^{new}_0 = t^{old},\ \
        t^{new}_{k+1} = t^{new}_{k} - \eta_{t_{k+1}}\nabla_t F(t^{new}_k, \psi^{old}),\ k=0,\ldots,K-1,\ \
        t^{new} = t^{new}_{K-1},\\
        &\psi^{new} = \psi^{old} + \eta_{\psi}\nabla F(t^{new},\psi^{old}).
    \end{split}
\end{equation}
How to choose  $K$ and $\eta_t/\eta_\psi$? We discuss this in the next section with further experimentation in Sec.~\ref{sec:Experiments}. Also note, if $\eta_\psi \ll K\eta_t,$ the system behaves as
$
    \min_{t} F(t,\psi)
$
with slow varying $\psi.$ The value $\kappa = K\eta_t/\eta_\psi$ characterizes the relative time scales of the transport map and potential updates.

\section{Experimental Results and Discussion}
\label{sec:discussion}

\subsection{Different formulations of NOT and connection with GANs}

We classify static 
neural optimal transport (NOT) solvers into the following groups:
\begin{enumerate}
    \item[I.] min--max optimization over the transport map $T$ and the potential $\varphi$ (or $\psi$);
    \item[II.] min--max optimization over $\varphi$ (or $\psi$) with the transport map defined as $T=\nabla\varphi$ (or $S=\nabla\psi$) \cite{makkuva2020icnn};
    \item[III.] regularized (and possibly unbalanced) formulations. The effect of regularization and the bias it introduces require separate investigation.
\end{enumerate}

In the case of quadratic cost, two equivalent formulations are commonly used:
\begin{enumerate}
    \item Minimization of the squared distance;
    \item Maximization of the dot product, also known as maximum crossentropy functional (MCF).
\end{enumerate}
In the latter case, optimization is performed over convex functions \cite{tarasov2026a}. All the results derived in this paper remain valid under this formulation, except for statement (iii) of Theorem~\ref{thm:not_clean} (see Sec.~\ref{sec:app_mcf}).

The connection between NOT and generative adversarial networks (GANs) can be understood through the existence of measurable transport maps. In particular, classical results originating from \cite{vonneumann1938}  imply that there exists a measurable function $G$ mapping a simple distribution $\mu$ (e.g., uniform $U[0,1]$) to an arbitrary non-atomic distribution $\nu$. In practice, classical GANs approximate such a map using neural networks and train it by minimizing the Jensen--Shannon divergence\footnote{Up to a constant, because of $$JS(\nu||G_\sharp\mu)=\sup_{D\colon X\to [0,1]}\left( \int_X \log D(x)d\nu + \int_X \log(1-D(x)) d(G_\sharp\mu) \right) + \log 2.$$} between $G_{\sharp}\mu$ and $\nu:$
\begin{equation}
    \min_{G\in\mathcal{M}(X,Y)}\sup_{D\colon Y\to (0,1)} 
    \left(
    \mathbb{E}_{y\sim\nu}[\log D(y)] 
    + 
    \mathbb{E}_{x\sim\mu}[\log(1-D(G(x)))]
    \right).
\end{equation}
WGAN replaces the divergence with the Kantorovich distance $W_1(\nu, G_\sharp\mu)$:
\begin{equation}
    \min_{G\in\mathcal{M}(X,Y)}\sup_{\varphi\in \operatorname*{Lip}_1(Y)} 
    \left(
    \mathbb{E}_{y\sim\nu}[\varphi(y)] 
    - 
    \mathbb{E}_{x\sim\mu}[\varphi(G(x))]
    \right).
\end{equation}
If $X,Y\subset\mathbb{R}^n$ have the same dimensionality, the optimal transport formulation can be used:
\begin{equation}
W_1(\mu,\nu) = \sup_{\psi\in C(Y)} \inf_{t}
\left( 
\int_X |x-t(x)|\, d\mu 
- \int_X \psi(t(x))\,d\mu
+ \int_Y \psi(y)\,d\nu
\right).
\end{equation}
In this setting, minimizing the WGAN objective enforces the marginal condition  $G_\sharp\mu=\nu.$ The optimal transport map $T^\star,$ when it exists, is one particular generator satisfying this condition. More generally, $(q,p)$-WGAN \cite{mallasto2019qpwgan} corresponds to the optimal transport problem with cost
$     c(x,y) = \frac{1}{p}d_q(x,y)^p,$ where $  
    d_q(x,y)=\left(\sum_{i=1}^n |x_i-y_i|^q\right)^{1/q}.
$

We summarize the training objectives for comparison in Table~\ref{tab:OT_GAN}. All these formulations share a common structural feature: they define degenerate saddle-point problems, where the discriminator (or potential) is not uniquely determined when the generator (or transport map) is close to optimal.

\begin{table}[t]
\setlength{\tabcolsep}{5pt}
\renewcommand{\arraystretch}{1.1}
\centering
\setlength{\tabcolsep}{4pt}
\caption{NOT solvers and related models}
\label{tab:OT_GAN}
\begin{tabular}{l|c|c}
\toprule
Group & Model & Objective \\
\midrule

I & WGAN &
$\displaystyle 
\inf_{G}\ \sup_{\varphi\in \mathrm{Lip}_1(Y)}
\Big(
\mathbb{E}_{y\sim\nu}[\varphi(y)]
-
\mathbb{E}_{x\sim\mu}[\varphi(G(x))]
\Big)$ \\

I & $(p,q)$-WGAN &
$\displaystyle 
\sup_{\psi\in C(Y)}\ \inf_{G}
\Big(
\mathbb{E}_{x\sim\mu}\big[\tfrac{1}{p} d_q(x,G(x))^p\big]
+
\mathbb{E}_{y\sim\nu}[\psi(y)]
-
\mathbb{E}_{x\sim\mu}[\psi(G(x))]
\Big)$ \\

I & NOT &
$\displaystyle 
 \sup_{\psi\in C(Y)}\ \inf_{t}
\Big(
\mathbb{E}_{x\sim\mu}\big[c(x,t(x))\big]
+
\mathbb{E}_{y\sim\nu}[\psi(y)]
-
\mathbb{E}_{x\sim\mu}[\psi(t(x))]
\Big)$ \\

I & MCF &
$\displaystyle 
 \inf_{\substack{v\in C(Y)\\ v\ \mathrm{convex}}}\ \sup_{t}
\Big(
\mathbb{E}_{x\sim\mu}[\langle x,t(x)\rangle]
+
\mathbb{E}_{y\sim\nu}[v(y)]
-
\mathbb{E}_{x\sim\mu}[v(t(x))]
\Big)$ \\

I & MCF  &
$\displaystyle 
 \inf_{\substack{u\in C(X)\\ u\ \mathrm{convex}}}\ \sup_{s}
\Big(
\mathbb{E}_{y\sim\nu}[\langle s(y),y\rangle]
+
\mathbb{E}_{x\sim\mu}[u(x)]
-
\mathbb{E}_{y\sim\nu}[u(s(y))]
\Big)$ \\

II & MCF  &
$\displaystyle 
\inf_{\substack{u\in C(X)\\ u\ \mathrm{convex}}}
\ \sup_{\substack{v\in C(Y)\\ v\ \mathrm{convex}}}
\Big(
\mathbb{E}_{y\sim\nu}[\langle \nabla v(y),y\rangle]
+
\mathbb{E}_{x\sim\mu}[u(x)]
-
\mathbb{E}_{y\sim\nu}[u(\nabla v(y))]
\Big)$ \\

\bottomrule
\end{tabular}
\end{table}

Finally, we note an important distinction: in GANs and WGANs, the generator $G$ typically maps a low-dimensional latent distribution to a high-dimensional data space. While this guarantees the existence of such a map, it does not ensure that it coincides with an optimal transport map. In contrast, $(q,p)$-WGAN and NOT formulations operate on spaces of equal dimensionality, where the metric  as a cost function is used.

\subsection{Connection with  Flow-Matching}
Flow-matching approximates OT trajectories with an ODE (\cite{FM}). NOT can be viewed as the solution of ODE in one step. The continuous limit of the descrete optimiszation scheme is the following dynamic system
\begin{equation}\label{eq:dynamic_syst}
    \begin{cases}
        \dfrac{\partial \rho_s}{\partial s} + \nabla(\rho_s v_s)=0, \text{ where } v_s(y)=-(y-t_s^{-1}(y)) + \nabla\psi_\tau(y), \\[10pt] 
        \dfrac{\partial \psi_\tau}{\partial \tau} = \kappa^{-1}(q(y)-\rho_s(y)).
    \end{cases}
\end{equation}
The importance of two-timescale updates has been widely observed in GAN training, where multiple discriminator updates per generator step are required for stability \cite{arjovsky2017wasserstein, gulrajani2017improved}. 
This behavior was formalized in the two time-scale update rule \cite{HeuselRUNKH17}, which shows that convergence depends on the relative step sizes of the two players. 
Similar empirical strategies are used in neural optimal transport \cite{korotin2021neuralot}, although an explicit characterization of the effective time-scale ratio is still lacking. From dynamic system (\ref{eq:dynamic_syst}) it follows that for large $\kappa$, $t$ evolves faster. In the case of $\psi$, it evolves faster for small values of $\kappa$.

\subsection{Numerical Results}

Fig.~\ref{fig:flatness} presents NOT results on two one-dimensional Gaussian distributions. The left plot shows the evolution of $\|t-T^\star\|^2_{L_2(\nu)}$, the middle plot shows $\|\nabla\psi-\nabla\psi^\star\|^2_{L_2(\nu)}$, and the right plot shows the values of $F(t,\psi)$. The figure illustrates that convergence of $t_\theta$ may occur while $\psi_\phi$ remains far from the optimal potential $\psi^\star$ (see Sec.~\ref{sec:data} for details on the data used in the experiments).

\begin{figure}[t]
  \centering
  \includegraphics[width=\textwidth]{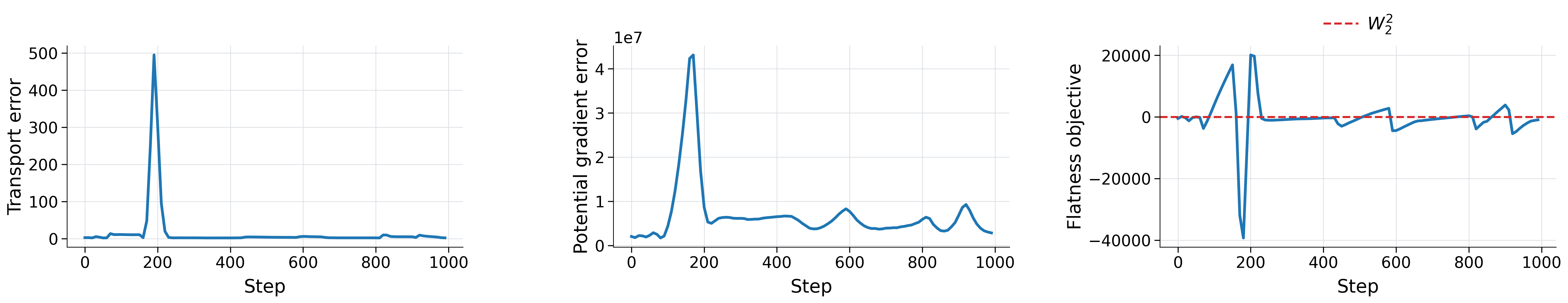} 
  \caption{Convergence of transport map with divergence of potential.}
  \label{fig:flatness}
\end{figure}

We evaluate the performance of NOT in case of two 2D Gaussians with $K\in\{1,\ 5,\ 10,\ 15,\ 20\}$ and $\eta_\psi \in \{\eta_t,\ 0.1\eta_t,\ 0.01\eta_t\}.$ The results in Fig.~\ref{fig:general} demonstrate the importance of the value $K\eta_t/\eta_\psi:$ for large $K$ small $\eta_t/\eta_\psi$ the transport map is approximated better, for small $K$ and large $\eta_t/\eta_\psi$ the potential is learned better. For experiments on more complex data see Sec.~\ref{sec:Experiments}. These examples illustrate that for the approximation of $T^\star$ the potential shouldn't be optimal.


\subsection{Comparison with Previous Results}

Different estimates exist in the literature on NOT, they assume $\lambda$-strong convexity for $v$ like \cite{tarasov2026a}, or even $c$-concavity of $\psi$ like \cite{fan2021neuralmonge_arxiv}. We relax $\psi$ to uniformly Lipschitz functions and in contrast to previous work our estimate does not depend on $\psi$ (the second term of $F(t,\psi)$ can be estimated by $d_{KR}$).

We emphasize that the existence of an infimum does not guarantee the existence of a minimizer (Table~\ref{tab:existence}), a distinction that has led to incorrect claims in the literature, e.g., \cite[Lemma~1]{korotin2023neural}. We also caution against inferring optimality of $t$ and $\psi$ from the condition $F(t,\psi)\approx F(t^\star,\psi^\star)$: as illustrated in Fig.~\ref{fig:flatness}, this implication does not hold in general. 

In \cite{Spurious},  spurious solutions are discussed. 
They rely on the convergence of the semi-dual problem, which  is not guaranteed  due to the degeneracy of the saddle-point (Theorem~\ref{thm:not_clean}). They consider distributions concentrated on $(n-1)$-dimensional manifolds, when  the Monge map dos not exist. Real data usually contain noise, or when their geometry is known, one can use OT on manifolds. 
We argue that spurious solutions in the semi-dual formulation arise  from the degeneracy. Near optimal transport maps, the dual potential becomes non-identifiable: different potentials  yield distinct gradients. As a result, optimization may converge to stationary points that do not correspond to $W_2(\mu,\nu)^2$, a phenomenon known as \emph{gradient deviation} \cite{korotin2021neuralot}.

This viewpoint also explains the empirical observation of \cite{makkuva2020icnn}[Remark~3.4] that unconstrained neural parameterizations of the transport map (e.g., $\nabla v$ in Table~\ref{tab:OT_GAN}) can outperform convex architectures. Imposing structural constraints on the potential, such as convexity or Lipschitz continuity, reduces degeneracy by eliminating flat directions. However, it also alters the optimization landscape by relaxing the exact dual constraints and may prevent perfect separation (\ref{eq:separ}), thereby introducing approximation bias. To make the saddle-point strict, one can add a regularization term on $\psi,$ see, e.g., \cite{korotin2019w2gn}, but it results in a different from OT generator.

\begin{figure}[t]
  \centering
  \includegraphics[width=\textwidth]{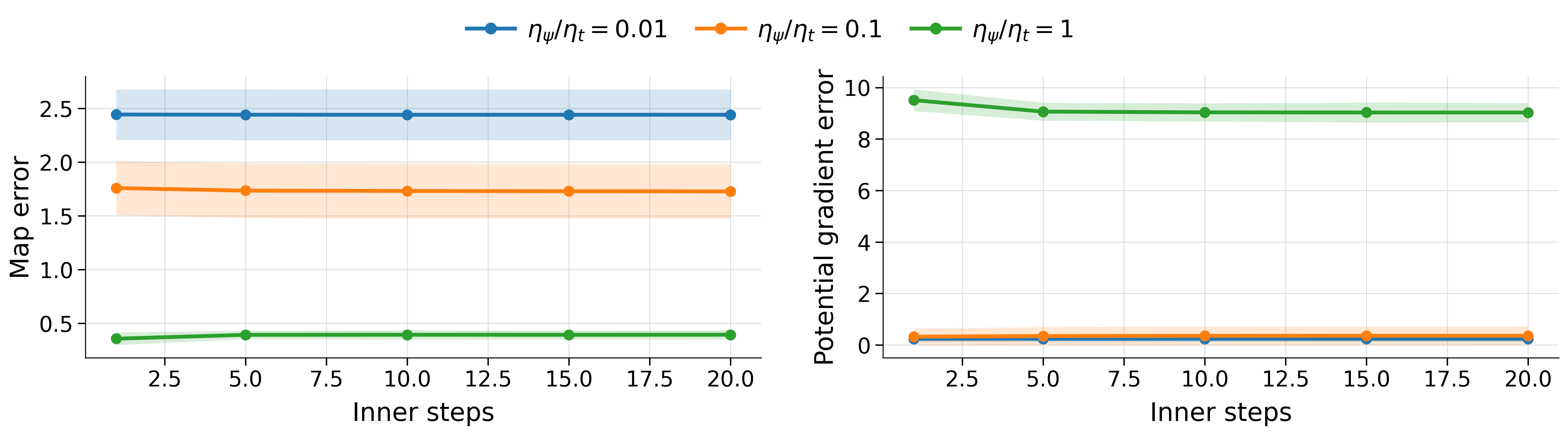}
  \caption{$\|t-T^\star\|^2_{L_2(\mu)}$ and $\|\nabla\psi-\nabla\psi^\star\|^2_{L_2(\nu)}$  with respect to $K$ and $\eta_\psi/\eta_t$.}
  \label{fig:general}
\end{figure}

\subsection{Constrained Optimization Formulation and Connection with Unbalanced OT}
\label{sec:5.5}

Consider  functions $f\colon L_2(X)\to\mathbb{R}$ and $g\colon L_2(X)\to \mathcal{M}(X)$ and the problem
\begin{equation}
        f(t) = \int\limits_X \|x-t(x)\|^2 d\mu \to \min_{t\in L_2(X)}, \qquad    g(t) = t_{\sharp}\mu -\nu = 0. \label{eq:hard_constr}
\end{equation}

Its Lagrangian coincides with $F$ and one could expect the following minimax formulation  
\begin{equation}\label{eq:lagrange_form}
  \inf_{t\in L_2(X)}\sup_{\psi\in C(X)} F(\psi, t).
\end{equation}
In this Monge formulation, $g(t)=0$ is the constrain and $\psi$ is the dual variable, which can be arbitrary, when the constrain is satisfied. The Lagrangian is not convex, and the theory of saddle-point can not be applied (in particular, interchange of infimum and supremum). We consider constrain in week sense, the case of absolutely continuous densities was studied in \cite{Evans}.

Even though problems (\ref{eq:th15}) and (\ref{eq:lagrange_form}) are not equivalent, in the alternating scheme the order of $\sup$ and $\inf$ is interchangeable. Therefore, NOT can be viewed as a constrained optimization problem that makes it similar to unbalanced OT, where the constraint $t_{\sharp}\mu=\nu$ is relaxed via penalization.   

\subsection{Limitations}

Our analysis relies on the existence of an optimal Monge map (see Table~\ref{tab:existence}). In Theorem~\ref{th:conv}, we further assume that $\psi$ is uniformly Lipschitz, which is reasonable in practice—for instance, when neural networks are regularized via gradient clipping or spectral normalization. 

Stronger assumptions are required to characterize the second variation of $F(t,\mu)$ because of its degeneracy. In particular, convexity of the supports of $\mu$ and $\nu$, together with their absolute continuity, plays a crucial role, as demonstrated in \cite{holdermaps}.

Finally, finite-sample approximations impose intrinsic limitations: lower bounds on the accuracy of empirical optimal transport estimators are known (see \cite[Theorem~2]{hutter2021}).

\subsection*{AI use statement}
In this work, we have not used generative AI tools for any of the required-disclosure
tasks: generating synthetic datasets, developing theoretical models or conceptual
frameworks, formulating mathematical claims, providing critical ingredients for
proving mathematical claims, assisting in the writing of proofs, proposing or
refining hypotheses, designing or providing feedback on research methodology or
experiments, implementing methods, assisting with translation, cleaning and
reformatting datasets, supporting qualitative or thematic data analysis, or
interpreting results.

Additionally, we used generative AI tools (ChatGPT) to edit the manuscript for
readability, and to help identify relevant prior work (Claude), format
citations, and suggest keywords for the submission. We have reviewed all
AI-assisted work: all edited text was checked against the original meaning and
technical accuracy by the authors, and all AI-suggested literature references
were independently verified against the original sources before citing. We take
responsibility for the final content of this work, including text, claims, or
artifacts produced with the aid of generative AI.

\subsection*{Reproducibility statement}
An anonymized version of our full codebase, configuration files, and trained
model weights is included as supplementary material, enabling exact
reproduction of all experiments (Sec.~\ref{sec:code}); this will be released
publicly under the MIT license upon acceptance. Section~\ref{sec:data}
gives complete descriptions of the data-generation procedures used to produce
Fig.~\ref{fig:flatness} and Fig.~\ref{fig:general}, and of the ICNN-based
benchmark construction used in our additional experiments (Sec.~\ref{sec:data}),
including all distributional parameters, network architectures, and training
hyperparameters needed to regenerate these datasets from scratch. Complete proofs of Theorems \ref{thm:not_clean} and \ref{th:conv} and Remark~\ref{remk2} are
given in Appendix~\ref{sec:Proofs}.

\subsection*{Acknowledgments}
The authors are grateful to Robert J.~McCann for insightful discussions and for suggesting the key idea underlying the proof of Theorem~\ref{thm:not_clean} and bringing to our attention paper~\cite{refId0}. Also the author is grateful to Mark Bocko for the discussion of the convergence conditions. This work was supported in part by the New York State Center of Excellence in Data Science under Grant C25089A007. The authors declare no competing interests.





\bibliography{refs}
\bibliographystyle{iclr2027_conference}



\appendix


\section{Proofs of Theorems \ref{thm:not_clean} and \ref{th:conv} and Remark~\ref{remk2}}
\label{sec:Proofs}

\begin{proof}[Proof of Theorem~\ref{thm:not_clean}]

\textbf{Step 1: Rewriting the functional.}
We write
\[
F(\psi,t)
=
\int_X \bigl[c(x,t(x)) - \psi(t(x))\bigr]\,d\mu
+
\int_Y \psi(y)\,d\nu.
\]

\medskip

\textbf{Step 2: Minimization over $t$.}
Fix $\psi$. Set
\[
f(x,y) := c(x,y) - \psi(y).
\]
Then
\[
\inf_{t} \int_X f(x,t(x))\, d\mu.
\]
Since the problem is separable in $x$, by a measurable selection argument (e.g., Rockafellar–Wets, Theorem 14.60),
\[
\inf_{t} \int_X f(x,t(x))\, d\mu
=
\int_X \inf_{y \in Y} f(x,y)\, d\mu
=
\int_X \psi^c(x)\, d\mu.
\]
Therefore,
\[
\inf_{t} F(\psi,t)
=
\int_X \psi^c(x)\, d\mu
+
\int_Y \psi(y)\, d\nu.
\]

\medskip

\textbf{Step 3: Proof of (i).}
Taking the supremum over $\psi$, we obtain
\[
\sup_{\psi} \inf_{t} F(\psi,t)
=
\sup_{\psi}
\left(
\int_X \psi^c \, d\mu + \int_Y \psi \, d\nu
\right),
\]
which is exactly the Kantorovich dual problem.

\medskip

\textbf{Step 4: Proof of (ii).}
Using Step 2,
\[
\inf_t F(\psi,t)
=
\int_X \psi^c\, d\mu + \int_Y \psi\, d\nu,
\]
\[
\inf_t F(\psi^{cc},t)
=
\int_X (\psi^{cc})^c\, d\mu + \int_Y \psi^{cc}\, d\nu.
\]
Since $(\psi^{cc})^c = \psi^c$, we obtain
\[
\inf_t F(\psi^{cc},t) - \inf_t F(\psi,t)
=
\int_Y (\psi^{cc} - \psi)\, d\nu \ge 0.
\]
Equality holds if and only if $\psi = \psi^{cc}$ $\nu$-a.e., i.e., $\psi$ is $c$-concave.

\medskip

\textbf{Step 5: Proof of (iii).}
Suppose $t_\#\mu \ne \nu$. Then the signed measure
\[
\sigma := \nu - t_\#\mu
\]
is a nonzero signed Radon measure on the compact metric space $Y$, the Riesz representation theorem guarantees the existence of a continuous function $\tilde{\psi}\in C(Y)$ such that $\int_{Y}\tilde{\psi}\, d\sigma > 0$.
Let $\psi_M = M \tilde{\psi},$ then
\[
F(\psi_M,t)
=
\int c(x,t(x))\, d\mu + M\int \tilde{\psi} \, d\sigma,
\]
which tends to $+\infty$ as $M \to \infty$, we obtain
\[
\sup_{\psi} F(\psi,t) = +\infty.
\]

\medskip

\textbf{Step 6: Proof of (iv).}
Let $\psi^\star$ be an optimal Kantorovich potential. Then $\psi^\star$ is $c$-concave, and there exists an optimal transport map $T^\star$ such that
\[
T^\star(x) \in \arg\min_y \{c(x,y) - \psi^\star(y)\}
\quad \mu\text{-a.e.}
\]
Hence,
\[
c(x,T^\star(x)) - \psi^\star(T^\star(x)) = \psi^{\star c}(x),
\]
and therefore
\[
F(\psi^\star,T^\star)
=
\int_X \psi^{\star c}(x)\, d\mu
+
\int_Y \psi^\star(y)\, d\nu.
\]

For any $t$, using $\psi^{\star c}(x) \le c(x,t(x)) - \psi^\star(t(x))$, we obtain
\[
F(\psi^\star,t)
\ge
F(\psi^\star,T^\star).
\]

If $t = T^\star$, then $t_\#\mu = \nu$, and thus
\[
F(\psi, T^\star)
=
\int c(x,T^\star(x))\, d\mu,
\]
which is independent of $\psi$. Hence
\[
F(\psi^\star,T^\star) = F(\psi,T^\star)
\quad \text{for all } \psi.
\]

Formula (\ref{eq:map_via_grad}) follows from \cite{Gangbo_1996}.

\medskip

\textbf{Step 7: Proof of (v).}
Fix $\psi^\star$. Then minimizing $t \mapsto F(\psi^\star,t)$ is equivalent to minimizing
\[
\int_X \bigl[c(x,t(x)) - \psi^\star(t(x))\bigr]\, d\mu.
\]
By Step 2, any minimizer satisfies
\[
t(x) \in \arg\min_y \{c(x,y) - \psi^\star(y)\}
\quad \mu\text{-a.e.}
\]
For the quadratic cost, this minimizer is unique $\mu$-almost everywhere (by Brenier’s theorem), hence $t = T^\star$ $\mu$-a.e. and the minimizer is unique.

\end{proof}

\begin{proof}[Proof of Remark~\ref{remk2}]
\textbf{Step 1.} Use the pointwise inequality
\begin{equation}
\begin{split}
       |c(x,\, t_k(x))-c(x,\, T^\star(x))| &= \left|\frac{1}{2}|x-t_k(x)|^2 - \frac{1}{2}|x-T^\star(x)| \right| \\
       &= \frac{1}{2}\left| \langle t_k(x)-T^\star(x),\, t_k(x) + T^\star(x) -2x\rangle\right| \\
        &\leq \frac{1}{2} |t_k(x)-T^\star(x)|\, |t_k(x) + T^\star(x) -2x|. 
\end{split}
\end{equation}
Since $X$ and $Y$ are  bounded, there exists $C>0$ such that
$|t_k(x)+T^\star(x)-2x| \leq 2C$
for every $x\in X$. Hence, by the Cauchy--Schwarz inequality,
\begin{equation}
          \left|\int c(x,\, t_k(x))\,d\mu(x) - W_2(\mu,\, \nu)^2\right|
        \leq
        C \int |t_k-T^\star|\,d\mu(x) \leq C \|t_k -T^\star\|_{L^2(\mu)},
\end{equation}
where we used the fact that $\mu$ is a probability measure.

\medskip

\textbf{Step 2.} Since $T^\star_{\sharp}\mu=\nu,$  consider the coupling
$
    \pi_k = (t_k,\, T^\star)_{\sharp}\mu.
$
By the definition, 
\begin{equation}
    W_2(t_{k\sharp}\mu,\, \nu)^2
    \leq
    \int |t_k(x)-T^\star(x)|\,d\mu(x) = \|t_k-T^\star\|^2_{L^2(\mu)}.
\end{equation}
Therefore,
\begin{equation}
    d_{KR}(t_{k\sharp}\mu,\, \nu)
    \leq W_1(t_{k\sharp}\mu,\, \nu) \leq W_2(t_{k\sharp}\mu,\, \nu)
    \leq
    \|t_k-T^\star\|_{L^2(\mu)}.
\end{equation}

\medskip

\textbf{Step 3.} Set
$
    g_k = \psi_k-\psi^\star 
$ and
$c_k = (\max g_k + \min g_k)/2.$ Then
\begin{equation}
\begin{split}
    \left| \int(\psi_k-\psi^\star)\,d(t_{k\sharp}\mu -\nu) \right| &= \left| \int(g_k-c_k)\,d(t_{k\sharp}\mu -\nu) \right| \\ &= M \left| \int \frac{g_k-c_k}{M}\,d(t_{k\sharp}\mu -\nu) \right| \leq M d_{KR}(t_{k\sharp},\, \nu),
\end{split}
\end{equation}
where $M=\max(2L',\, \operatorname*{diam}(Y)L').$ By $L'$ we denote a common Lipschitz constant for $\psi_k$ and $\psi^\star$.

\medskip

\textbf{Step 4.} From
\begin{equation}
    F(\psi_k,\, t_k) - W_2(\mu,\, \nu)^2 =\int c(x,\, t_k(x))\, d\mu - W_2(\mu,\, \nu)^2 + \int (\psi_k-\psi^\star)\, d(t_{k\sharp}\mu-\nu)
\end{equation}
we have
\begin{equation}
\begin{split}
    \left| F(\psi_k,\, t_k) - W_2(\mu,\, \nu)^2\right| &\leq \left| \int c(x,\, t_k(x))\, d\mu - W_2(\mu,\, \nu)^2 \right| + \left| \int (\psi_k-\psi^\star)\, d(t_{k\sharp}\mu-\nu) \right| \\ &\leq (C+M)\|t_k-T^\star\|_{L^2(\mu)}.
\end{split}
\end{equation}

Altogether, this gives that 
   $ F(\psi_k,\, t_k) \to W_2(\mu,\, \nu)^2$  in $L^2(\mu)$ and $d_{KR}(t_{k\sharp}\mu,\, \nu)\to 0.$
\end{proof}

\begin{proof}[Proof of Theorem~\ref{th:conv}] First, note that $\psi^\star\in \operatorname*{Lip}_L$ for some $L>0,$ because it is differentiable. Therefore, class $\psi\in C(Y)$ can be narrowed to $\psi\in\operatorname*{Lip}_L$. Recall that $T^\star(x)=\nabla u(x)$ and $S^\star(y)=\nabla v(y),$ where
\begin{equation}
    u(x) = \frac{|x|^2}{2} - \varphi^\star(x),\qquad v(y) = \frac{|y|^2}{2} - \psi^\star(y).
\end{equation}
\textbf{Step 1.} \cite[Theorem~3.3]{Figalli_2014} guaranties $u\in C^2(\overline{X})$ that gives the estimate $\|u\|_{_{\infty}}< C$ for some $C>0.$ Then, from the boundedness of $\det D^2 u$  away from zero and infinity, the positive definetness of the Hessian follows  $D^2 v\geq \lambda I$ (see also \cite[Corollary~3.2]{Gigli_2011}).\footnote{They prove $\lambda I\leq D^2 u\leq \Lambda I,$ we note that $D^2v = (D^2u)^{-1}$ and redenote $1/\Lambda$ by $\lambda.$} Therefore, $v$ is $\lambda$-strongly convex, i.e.,
\begin{equation}\label{eq:proof_strong_conv}
    v(y_2)\geq v(y_1) + \langle \nabla v(y_1), y_2-y_1\rangle +\frac{\lambda}{2} |y_2 - y_1|^2,\qquad \forall y_1,y_2\in Y.
\end{equation}

\textbf{Step 2.} 

\begin{equation}\label{eq:proof_conv_step2}
    \begin{split}
        F(t,\psi^\star)-F(T^\star,\psi^\star) =& \frac{1}{2}\int |x-t(x)|^2d\mu - \int\psi^\star(t(x))d\mu + \int \psi^\star(y)d\nu\\
        &-\frac{1}{2}\int |x-T^\star(x)|^2d\mu + \int\psi^\star(T^\star(x))d\mu - \int \psi^\star(y)d\nu\\
        =& \int\left[ -\langle x,t-T^\star\rangle+v(t(x))-v(T^\star(x))\right] d\mu\\
        \overset{(\ref{eq:proof_strong_conv})}{\geq}& \int\left[ -\langle x,t-T^\star\rangle+\langle \nabla v(T^\star(x)), t-T^\star\rangle +\frac{\lambda}{2}|t-T^\star(x)|^2\right] d\mu\\
        =& \frac{\lambda}{2} \|t-T^\star\|_{L^2(\mu)}^2.
    \end{split}
\end{equation}
We uses the fact that $\nabla v(T^\star(x))=x.$

\textbf{Step 3.}
We need the following estimate:
\begin{equation}\label{A:dkl_estimate}
    \int_Y (\psi^\star-\psi)  d(t_{\sharp}\mu - \nu) \le M  d_{KR}(t_{\sharp}\mu, \nu)
\end{equation}
with $M = \max(2L',\, L' \cdot \mathrm{diam}(Y))$, where $L' = \max\{L,\, \|\psi^{\star}\|_{Lip}\}$ is a common Lipschitz constant. 

For each $\psi$ define $g = \psi^\star-\psi$. Since $\psi, \psi^\star$ are $L'$-Lipschitz, $g$ is $2L'$-Lipschitz. Since $t_{\sharp}\mu - \nu$ has total mass zero, 
\begin{equation}
\int (g-c)  d(t_{k\sharp}\mu - \nu) = \int g  d(t_{k\sharp}\mu - \nu)    
\end{equation}
for any constant $c$. We take $c = \tfrac{1}{2}(\max g + \min g)$. With this choice, 
\begin{equation}
\|g - c\|_\infty \le \tfrac{1}{2}(\max g - \min g) \le \tfrac{1}{2} \cdot 2L' \cdot \mathrm{diam}(Y) = L' \cdot \mathrm{diam}(Y).    
\end{equation}

The function $(g - c)/M$ satisfies both  $|(g-c)/M| \le 1$ and is 1-Lipschitz, so it is admissible in the supremum defining $d_{KR}$. Hence 
\begin{equation}
 \int_Y (\psi^\star-\psi)  d(t_{\sharp}\mu - \nu) = M \int \frac{g - c}{M}  d(t_{\sharp}\mu - \nu) \le M  d_{KR}(t_{\sharp}\mu, \nu).   
\end{equation}



\textbf{Step 4.} Notice that
\begin{equation}\label{eq:append_f}
    F(t,\psi) = F(t,\psi^\star) + \int(\psi-\psi^\star) d(t_\sharp\mu-\nu),
\end{equation}
then
\begin{equation}
\begin{split}
    \frac{\lambda}{2} \|t-T^\star\|_{L^2(\mu)}^2 &\overset{(\ref{eq:proof_conv_step2})}{\leq} F(t,\psi^\star)-F(T^\star,\psi^\star) \\
    &\overset{(\ref{eq:append_f})}{=} F(t,\psi) - F(T^\star,\psi^\star) - \int(\psi-\psi^\star) d(t_\sharp\mu-\nu)\\
    &\overset{(\ref{A:dkl_estimate})}{\leq} F(t,\psi) - W_2(\mu,\nu)^2 + Md_{KR}(t_\sharp\mu,\nu).
\end{split}
\end{equation}
Because $\lambda>0$ and does not depend on $t$ or $\psi,$ we can divide the inequality by $\lambda/2$ to obtain the final result. 
\end{proof}

\section{Alternative Formulation of NOT in Case of Quadratic Cost}
\label{sec:app_mcf}

If we look at the Kantorovich problem in case of quadratic cost
\begin{equation}
    \inf_{\pi\in\Pi(\mu,\nu)}\int\limits_{X\times Y}\left(\frac{|x|^2}{2} +\frac{|y|^2}{2} -\langle x,y\rangle\right)d\pi = \int\limits_X\frac{\|x\|^2}{2}d\mu+\int\limits_Y\frac{\|y\|^2}{2}d\nu - \sup_{\pi\in\Pi(\mu,\nu)}\int\limits_{X\times Y}\langle x,y\rangle d\pi
\end{equation}
we can connect the following maximum correlation functional to the Monge formulation:
\begin{equation}
    \sup_{t\in T(\mu,\nu)}\int\limits_{X}\langle x,t(x)\rangle d\mu. 
\end{equation}

Therefore, by duality,
\begin{equation}
\int\limits_X\frac{\|x\|^2}{2}d\mu+\int\limits_Y\frac{\|y\|^2}{2}d\nu - \sup_{t\in T(\mu,\nu)}\int\limits_{X}(x,t(x))d\mu =    \underset{\varphi(x)+\psi(y)\leq\|x-y\|^2}{\sup}\left(\int\limits_{X}\varphi d\mu + \int\limits_{Y}\psi d\nu\right),
\end{equation}
or
\begin{equation}
MCF(\mu,\nu) =    \inf_{u(x)+v(y)\geq\langle x,y\rangle}\left(\int\limits_{X}u d\mu + \int\limits_{Y}v d\nu\right).
\end{equation}
where $u(x)=\frac{|w|^2}{2}-\phi(x)$ and $v(y)=\frac{|y|^2}{2}-\psi(y)$ are convex functions.

Using semi-duality, we have
\begin{align}
MCF(\mu,\nu) =  
    \inf_{\substack{u \in C(Y)\\ u\text{ convex}}} 
\left( \int_X u(x)\, d\mu + \int_Y \sup_{x\in X}(\langle x,y\rangle -u(x))\, d\nu \right) \\
=\inf_{\substack{v \in C(Y)\\ v\text{ convex}}} 
\left( \int_X \sup_{y\in Y}(\langle x,y\rangle -v(x))\, d\mu + \int_Y g\, d\nu \right).
\end{align}
And using interchange theorem,
\begin{align}
MCF(\mu,\nu) =  
    \inf_{\substack{u \in C(X)\\ u\text{ convex}}} \sup_{s\in\mathcal{M}(Y,X)}\Bigl( \int\limits_X u(x) d\mu + \int\limits_Y  \bigl[ \langle s,y\rangle -u(s(y))\bigr]d\nu \Bigr)\\
    =\inf_{\substack{v \in C(Y)\\ v\text{ convex}}} \sup_{t\in\mathcal{M}(Y,X)}\Bigl( \int\limits_X \bigl[ \langle x,t\rangle -v(t(x))\bigr] d\mu + \int\limits_Y  v(y) d\nu \Bigr).
\end{align} 

Note that
\begin{equation}
    SDP(\mu,\nu) = \int\limits_X\frac{\|x\|^2}{2}d\mu+\int\limits_Y\frac{\|y\|^2}{2}d\nu - MCF(\mu,\nu).
\end{equation}

Consider the functional
\begin{equation}
    \tilde{F}(t,v) = \int\limits_X  \langle x,t(x)\rangle d\mu + \int\limits_Y  v(y) d(\nu-t_\sharp\mu).
\end{equation}
The same analysis can be applied as to $F(t,\psi).$ 







\section{Code Availability}
\label{sec:code}
Upon acceptance, we will release the full codebase, trained weights, and generated data under the MIT license on our GitHub repository and project page. Due to double-blind anonymity constraints, we do not include the public repository link at submission time; instead, the submitted supplementary material contains the code needed to reproduce this work.

\section{Data Generation}
\label{sec:data}

\paragraph{Figures in Sec.~\ref{sec:discussion}}
To obtain Fig.~\ref{fig:flatness}, we consider one-dimensional Gaussian distributions $\mathcal{N}(0,1)$ and $\mathcal{N}(2,1.5^2)$. Both $t$ and $\psi$ are parameterized by MLPs and trained until convergence. We then perturb $\psi$ with noise and continue training for an additional 1000 steps. For Fig.~\ref{fig:general}, we consider two-dimensional Gaussians $\mathcal{N}(0,I)$ and $\mathcal{N}(\mathbf{1}, 2^2 I)$, and run NOT with 30 different random seeds. The corresponding models and trained weights are available on the project page.

\subsection{Additional Experiments}
We use an ICNN-generated continuous OT benchmark, following the analytic-map
benchmark idea of \citet{korotin2021neuralot}. The source distribution
$\mu\subset\mathbb{R}^{192}$ is a 24-component Gaussian mixture:
\begin{equation}
    X\sim \sum_{j=1}^{24}\pi_j
    \mathcal{N}\!\left(m_j,\;0.35^2\operatorname{diag}(s_j^2)\right).
\end{equation}
The component means $m_j$ are random directions normalized to radius $2.5$.
Mixture weights are unequal, with $\pi=\operatorname{softmax}(\ell)$ and
$\ell_j\sim\mathcal{N}(0,1.25^2)$. Each component has anisotropic diagonal scale
$s_j$, where $\log s_{jk}$ is sampled with standard deviation $0.9$ and centered
within each component.

The ground-truth Brenier potential is a randomly initialized input-convex neural
network $\Phi_0:\mathbb{R}^{192}\to\mathbb{R}$ 
with five hidden layers of width 192, softplus activations, initialization
standard deviation $0.14$, and strong-convexity coefficient $0.05$. We calibrate
its scale on 4096 source samples:
\begin{equation}
    \Phi = a\Phi_0, \qquad
    a =
    \frac{\sqrt{\mathbb{E}\|X\|^2}}
         {\sqrt{\mathbb{E}\|\nabla \Phi_0(X)\|^2}} .
\end{equation}
The target distribution is then defined by the exact pushforward:
\begin{equation}
    T^\star(x)=\nabla \Phi(x), \qquad \nu=T^\star_{\#}\mu .
\end{equation}
Training batches are resampled every epoch with batch size 512 and 128 batches
per epoch; validation and test splits contain 8192 samples each. In the timescale
sweep we use $K\in\{1,2,5,10,20\}$, ratios
$\eta_\psi/\eta_T\in\{0.02,0.05,0.1,0.25,0.5,1\}$,
$\eta_T=5\cdot10^{-4}$, 4096 outer iterations, and disable solver noise.

\paragraph{Target-potential diagnostics.}
\label{sec:target_potential_diagnostics}
Because the dataset is generated by a known convex potential $\Phi$, we also know
the canonical target-side potential on paired samples. For
$y=T^\star(x)=\nabla\Phi(x)$, the convex conjugate satisfies\footnote{To avoid notational ambiguity, we use $*$ to denote the Legendre--Fenchel transform, while $\star$ denotes the optimal transport map and potential.}
\begin{equation}
    \Phi^\ast(y)=\langle x,y\rangle-\Phi(x),
    \qquad \nabla\Phi^\ast(y)=x .
\end{equation}
Thus, for max-correlation-style methods we compare the learned target potential
$v$ to $v^\star=\Phi^\ast$. For the semi-dual quadratic-cost potential used by
OTP, the corresponding $c$-concave potential is
\begin{equation}
    \psi^\star(y)=\frac12\|y\|^2-\Phi^\ast(y),
    \qquad \nabla\psi^\star(y)=y-x .
\end{equation}
Potential values are compared only after centering, since potentials are
identified up to an additive constant. We report centered target-potential MSE
and target-potential gradient MSE on 512 validation pairs.

\section{Experiments}
\label{sec:Experiments}



\subsection{OTP}
OTP is the direct-map semi-dual NOT solver: an MLP map $T_\theta$ is trained
against a $c$-concave DenseICNN target potential $\psi_\phi$ using the quadratic
semi-dual objective \citep{korotin2022neural,rout2021generative}. In the sweep,
$K$ is the number of map updates per potential update. The results are shown in Figure~\ref{fig:errors_otp}.

\begin{figure}[h]
    \centering

    \begin{subfigure}{0.48\linewidth}
        \centering
        \includegraphics[width=\linewidth]{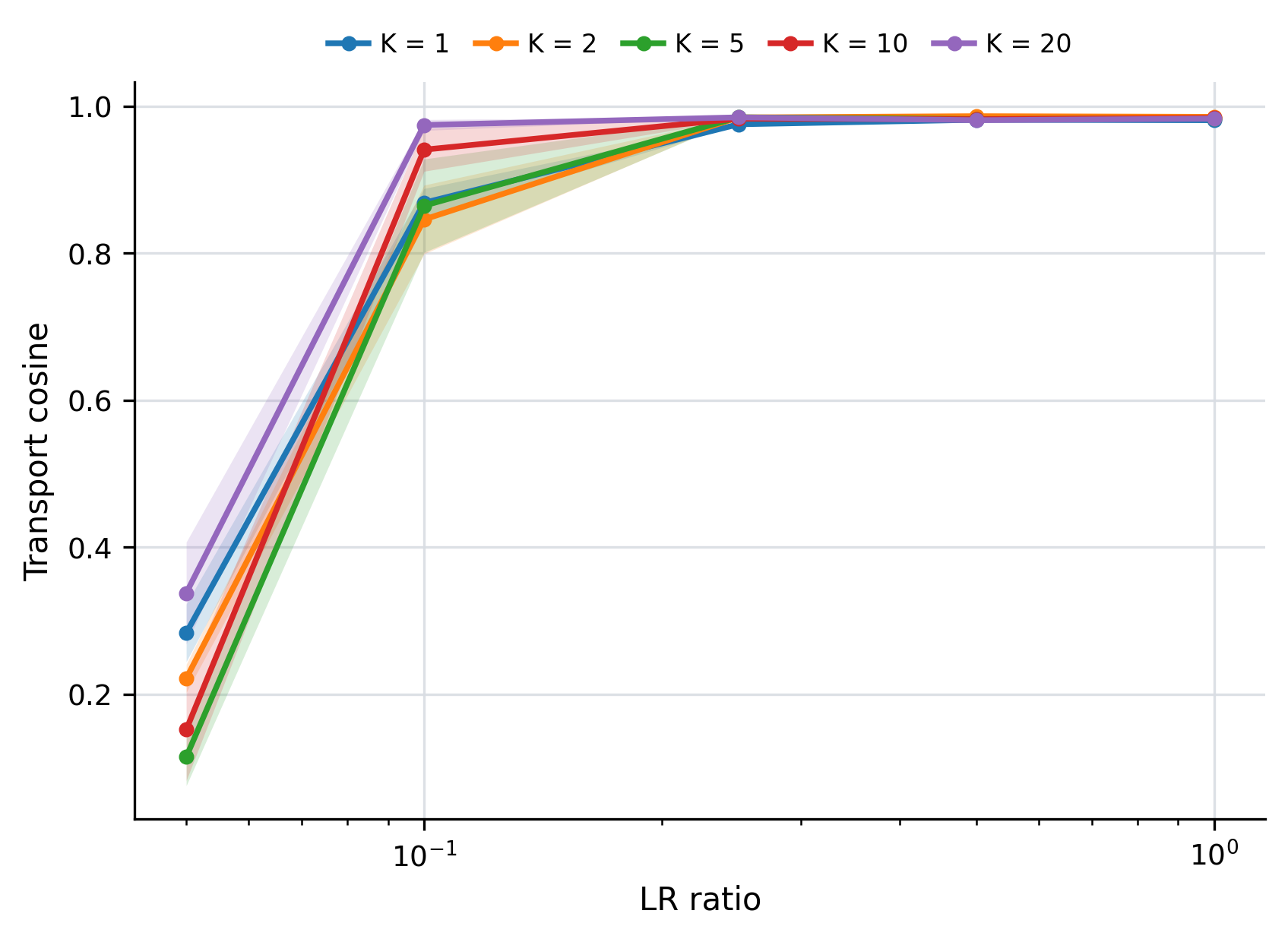}
        \caption{Map cosine similarity.}
        \label{fig:map_error}
    \end{subfigure}
    \hfill
    \begin{subfigure}{0.48\linewidth}
        \centering
        \includegraphics[width=\linewidth]{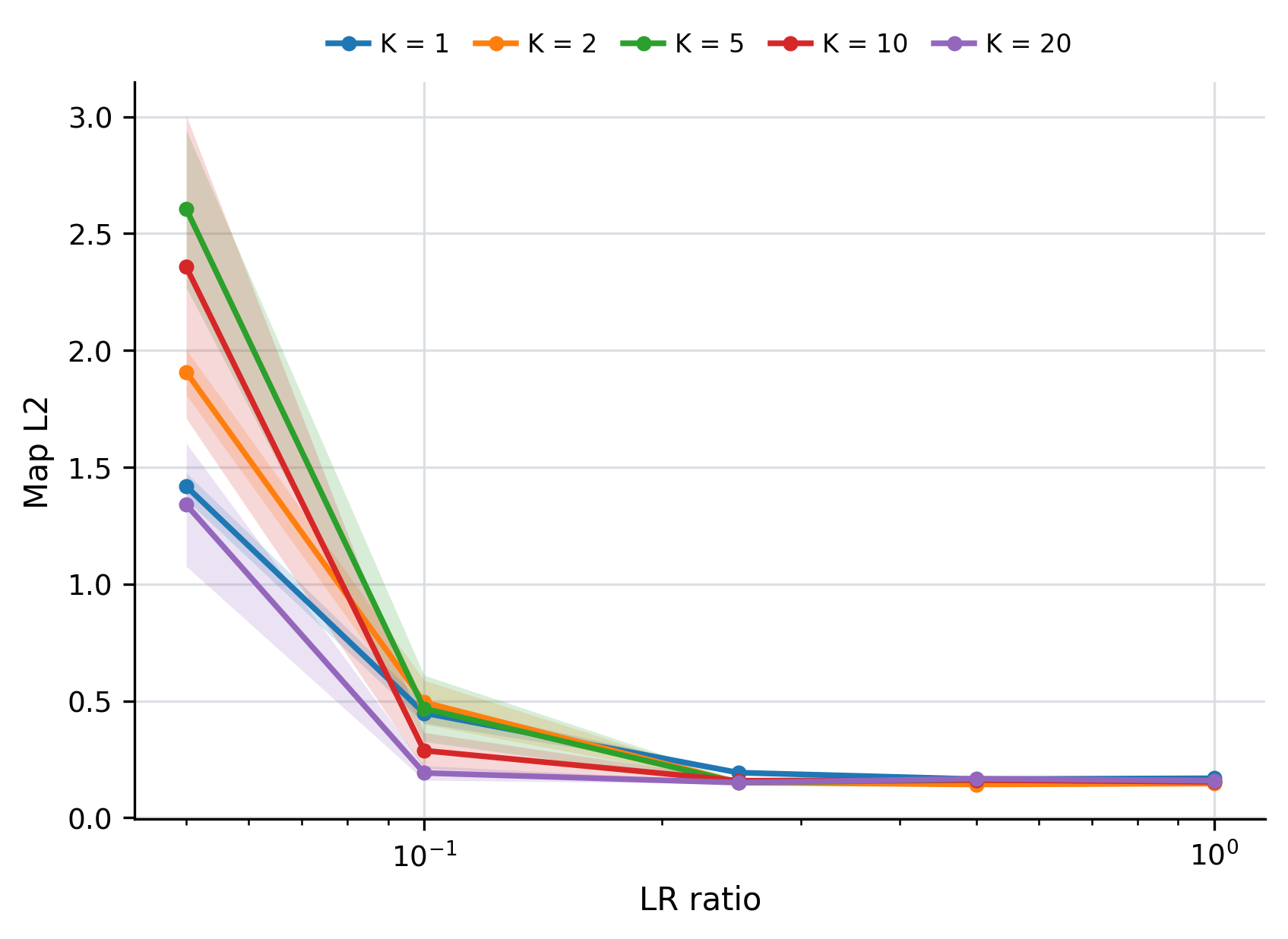}
        \caption{Map error.}
        \label{fig:grad_error}
    \end{subfigure}\\

    \begin{subfigure}{0.48\linewidth}
        \centering
        \includegraphics[width=\linewidth]{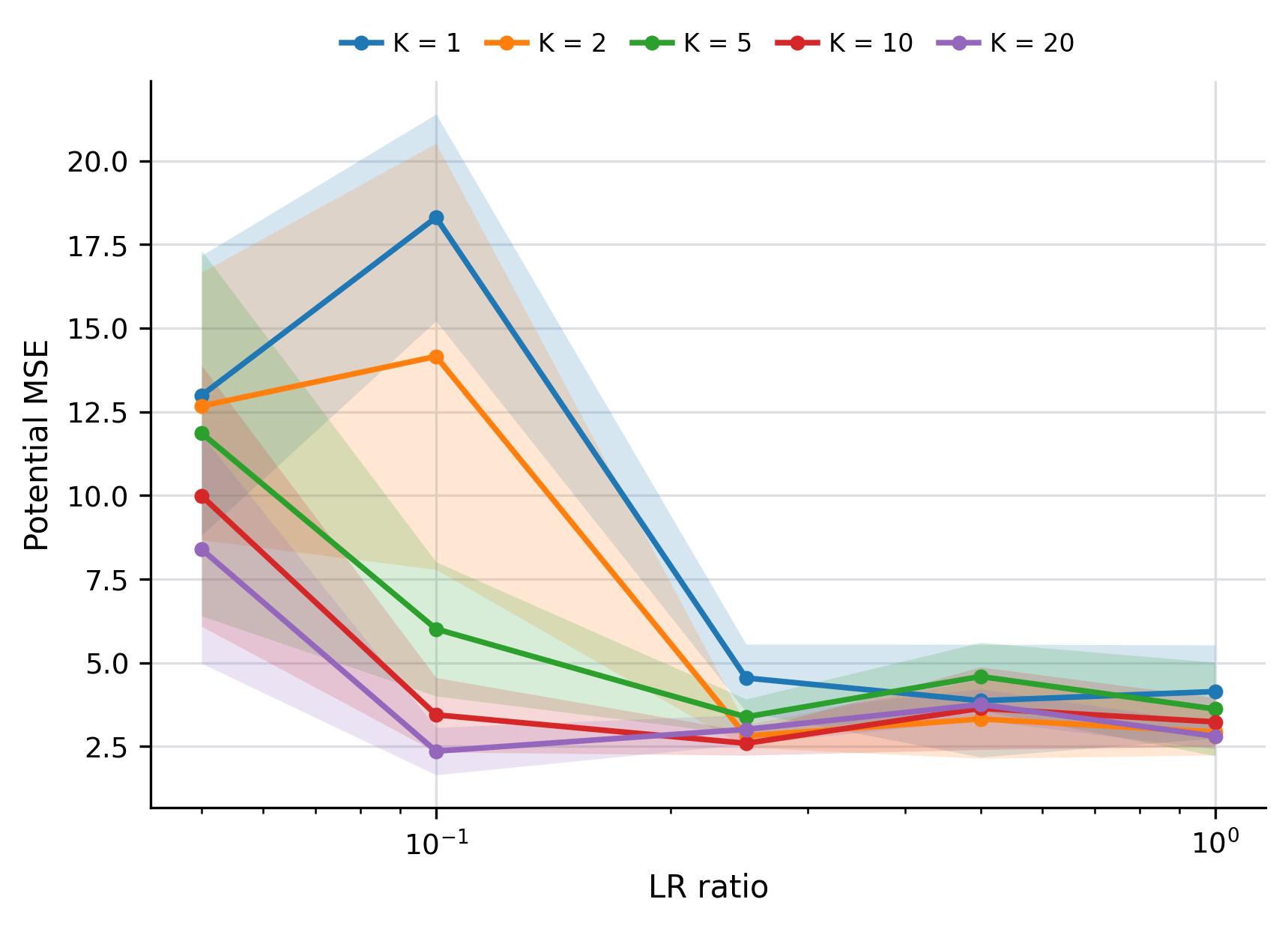}
        \caption{Potential error.}
        \label{fig:map_error}
    \end{subfigure}
    \hfill
    \begin{subfigure}{0.48\linewidth}
        \centering
        \includegraphics[width=\linewidth]{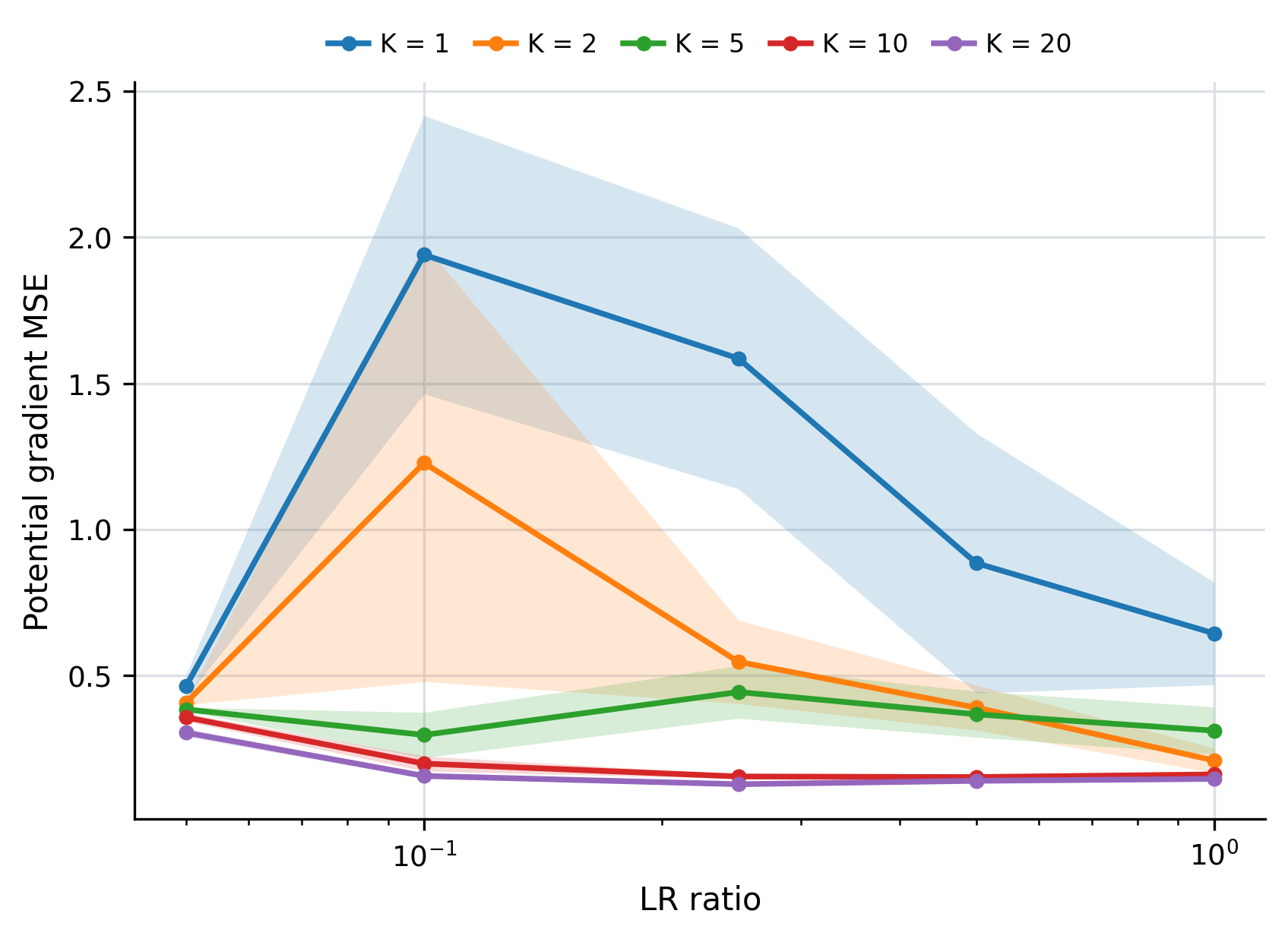}
        \caption{Potential gradient error.}
        \label{fig:grad_error}
    \end{subfigure}\\

    \begin{subfigure}{0.48\linewidth}
        \centering
        \includegraphics[width=\linewidth]{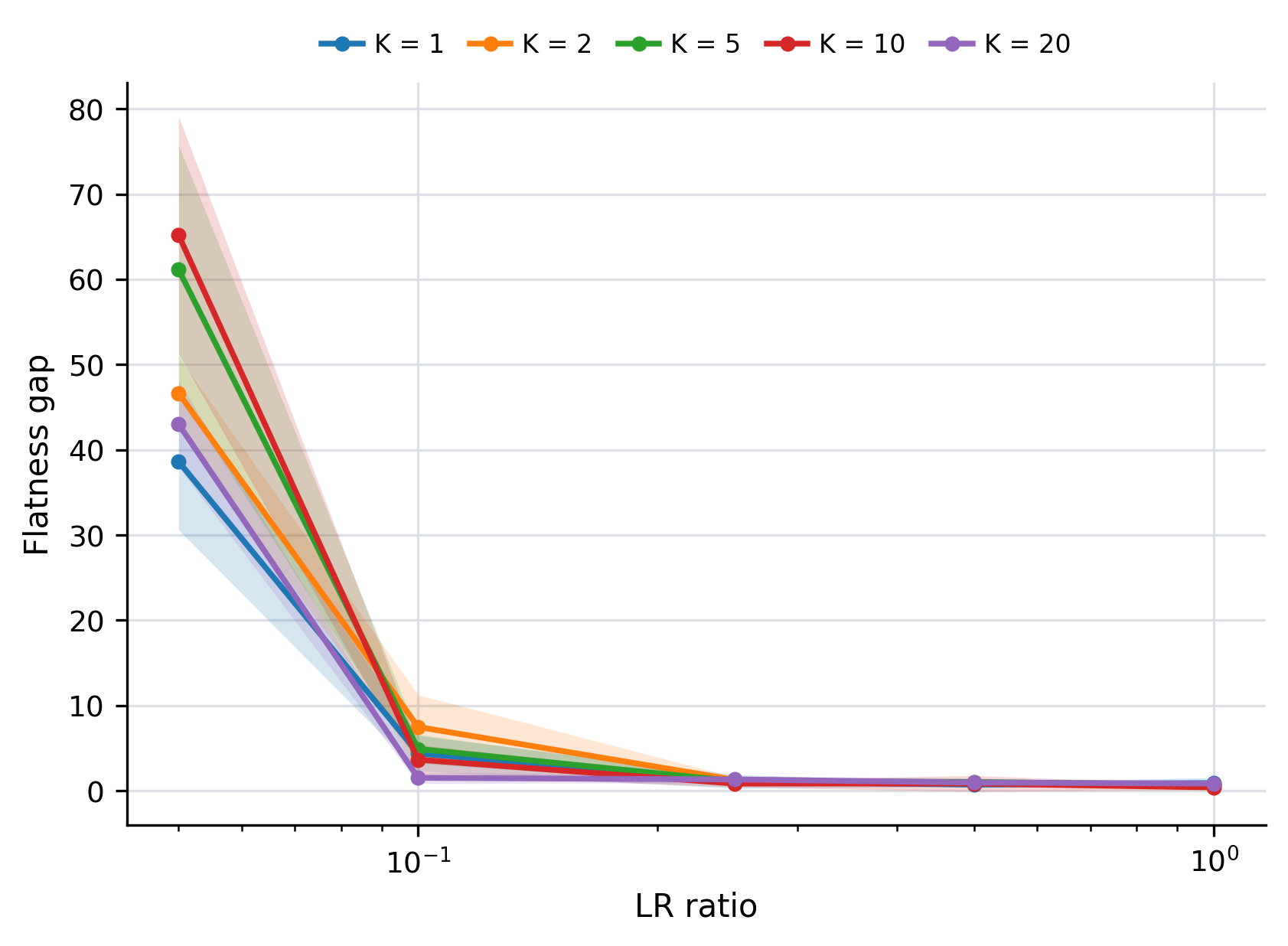}
        \caption{Flatness.}
        \label{fig:map_error}
    \end{subfigure}
    \hfill
    \begin{subfigure}{0.48\linewidth}
        \centering
        \includegraphics[width=\linewidth]{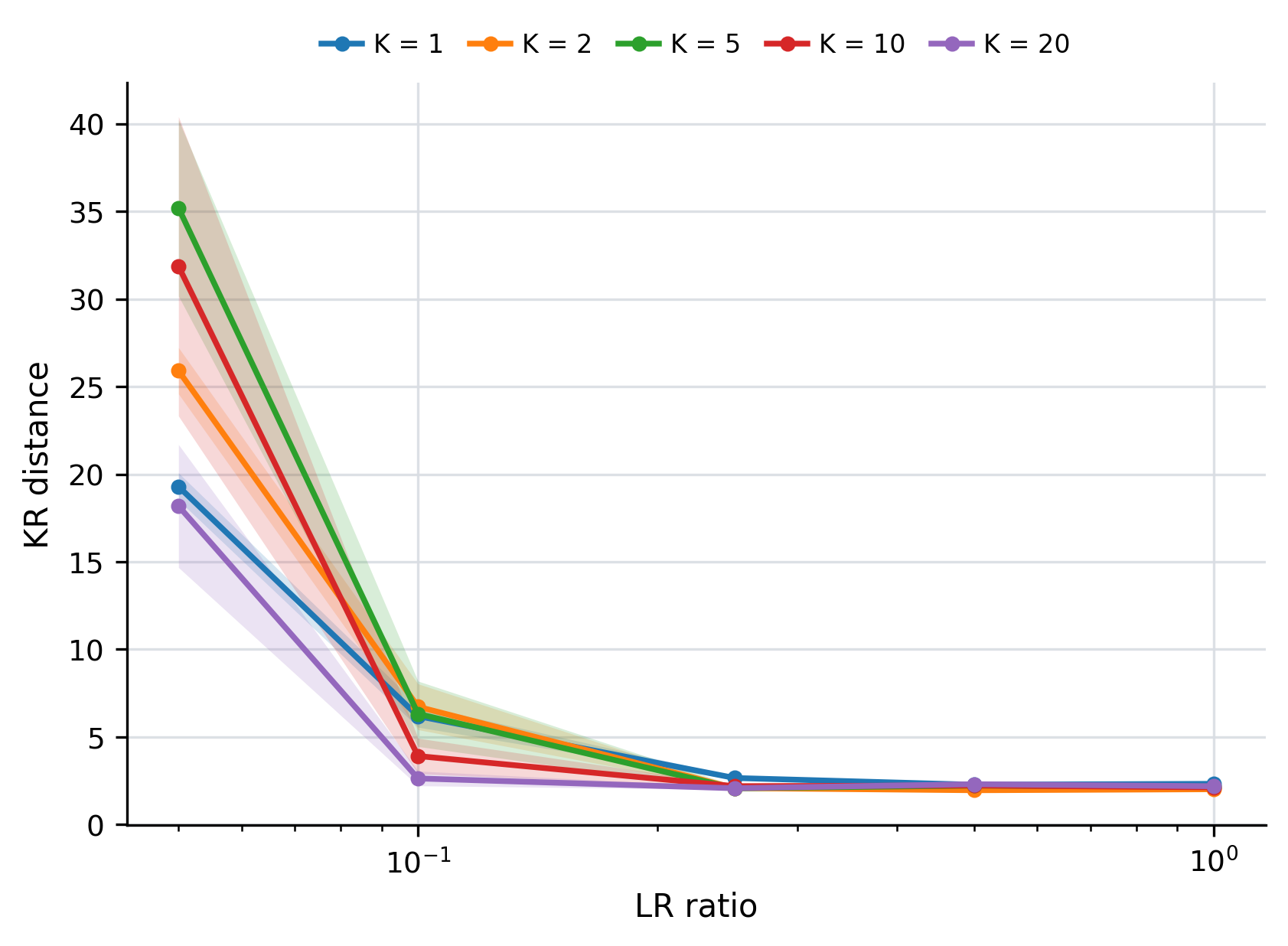}
        \caption{$d_{KR}$.}
        \label{fig:grad_error}
    \end{subfigure}

    \caption{Convergence behavior of the transport map and potential for the OTP method.}
    \label{fig:errors_otp}
\end{figure}

\subsection{Monge Map}
MongeMap follows the weak-form neural Monge-map formulation of
\citet{fan2023neural}. It uses a direct MLP map and an unconstrained MLP
potential, trained with the same alternating map/potential schedule. The results are shown in Figure~\ref{fig:errors_mm}.

\begin{figure}[h]
    \centering

    \begin{subfigure}{0.48\linewidth}
        \centering
        \includegraphics[width=\linewidth]{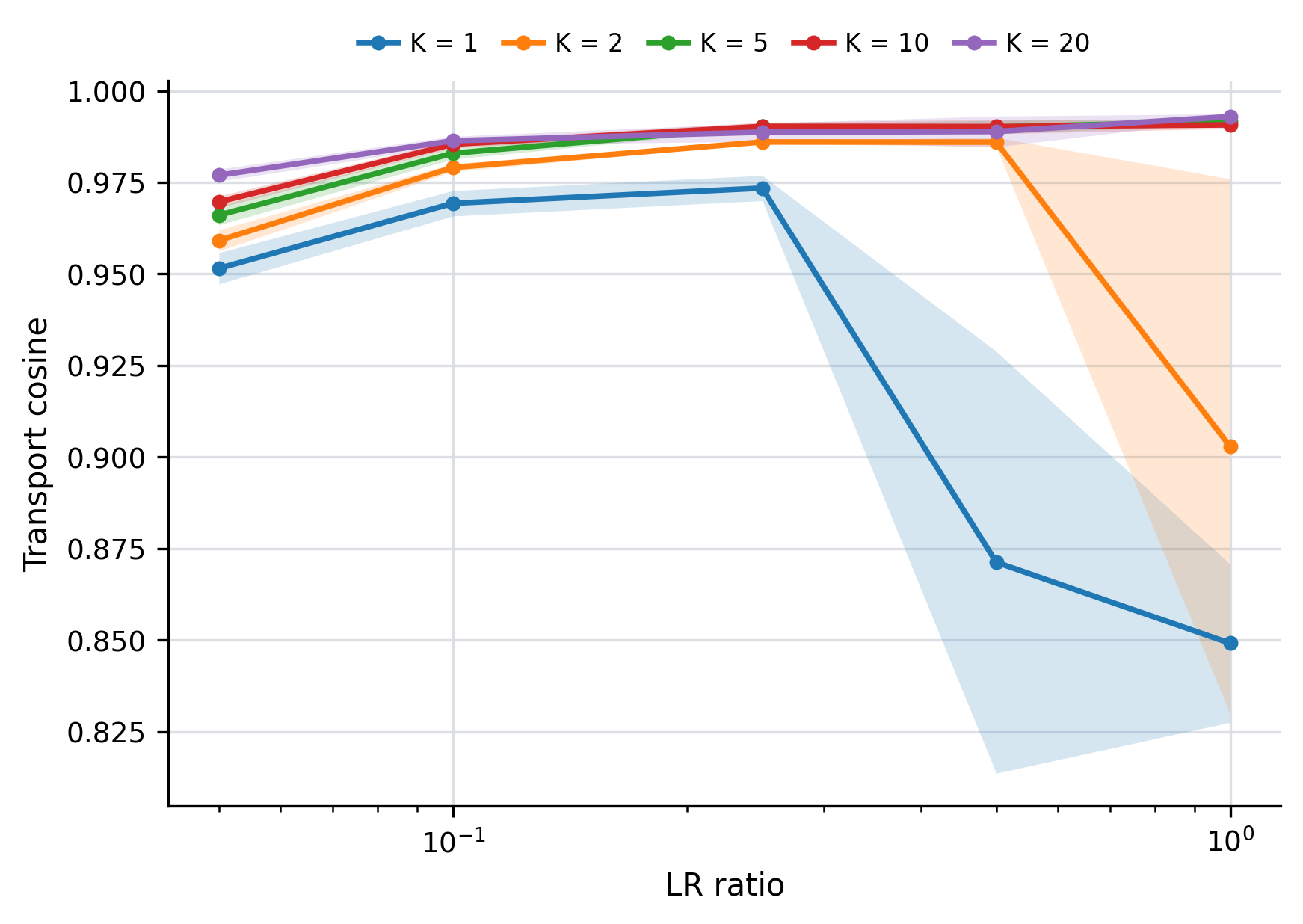}
        \caption{Map cosine similarity.}
        \label{fig:map_error}
    \end{subfigure}
    \hfill
    \begin{subfigure}{0.48\linewidth}
        \centering
        \includegraphics[width=\linewidth]{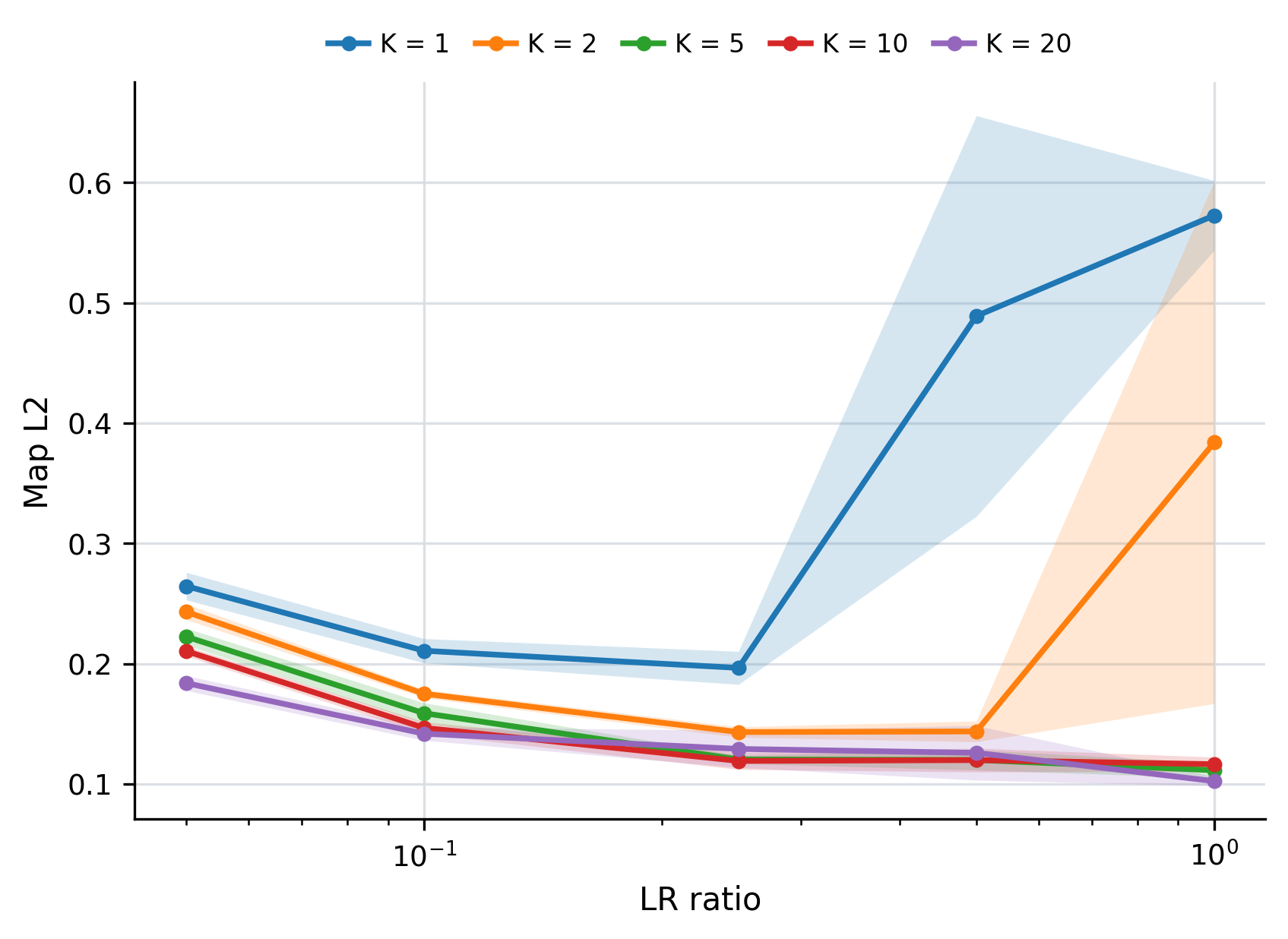}
        \caption{Map error.}
        \label{fig:grad_error}
    \end{subfigure}\\

    \begin{subfigure}{0.48\linewidth}
        \centering
        \includegraphics[width=\linewidth]{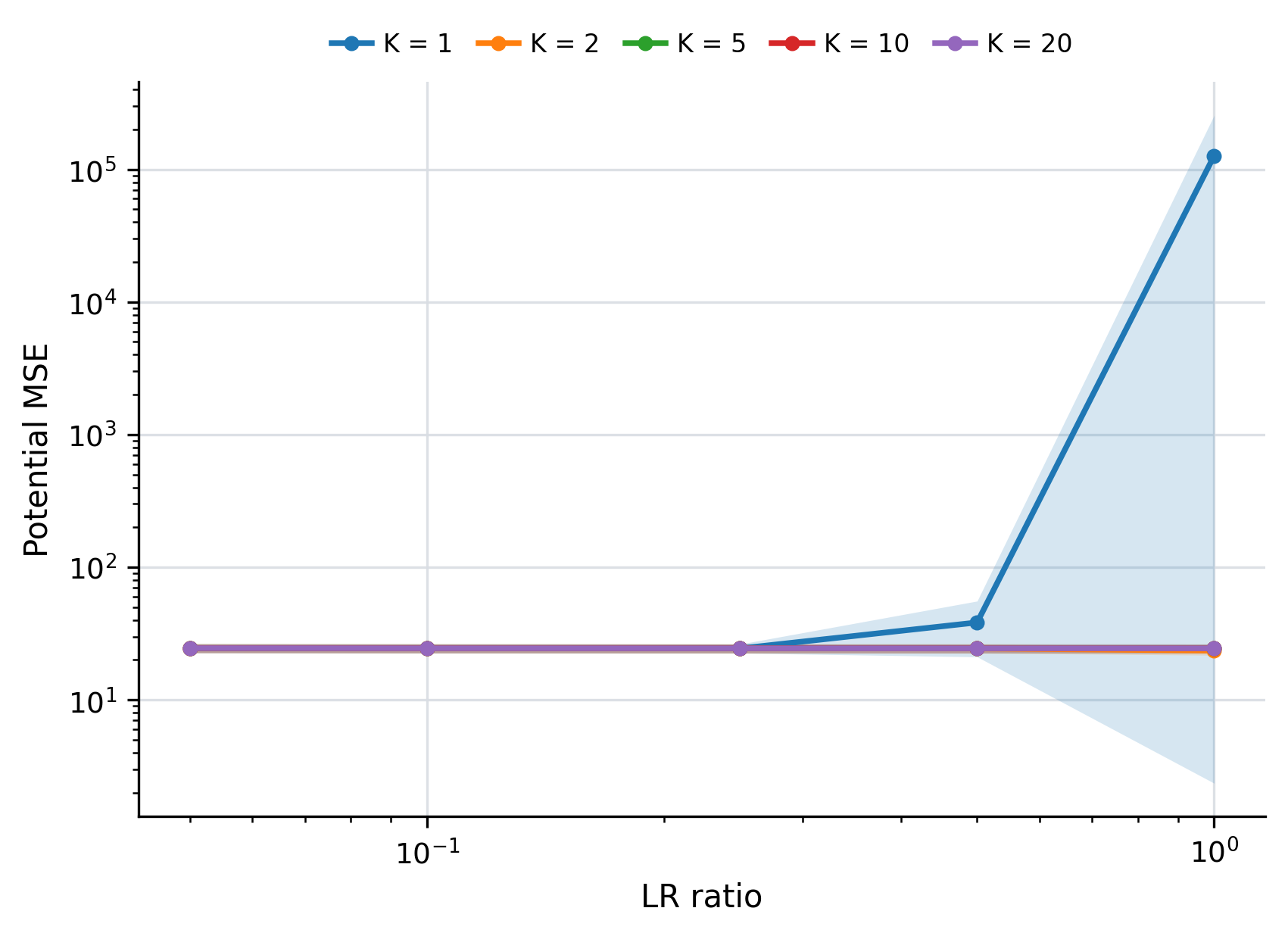}
        \caption{Potential error.}
        \label{fig:map_error}
    \end{subfigure}
    \hfill
    \begin{subfigure}{0.48\linewidth}
        \centering
        \includegraphics[width=\linewidth]{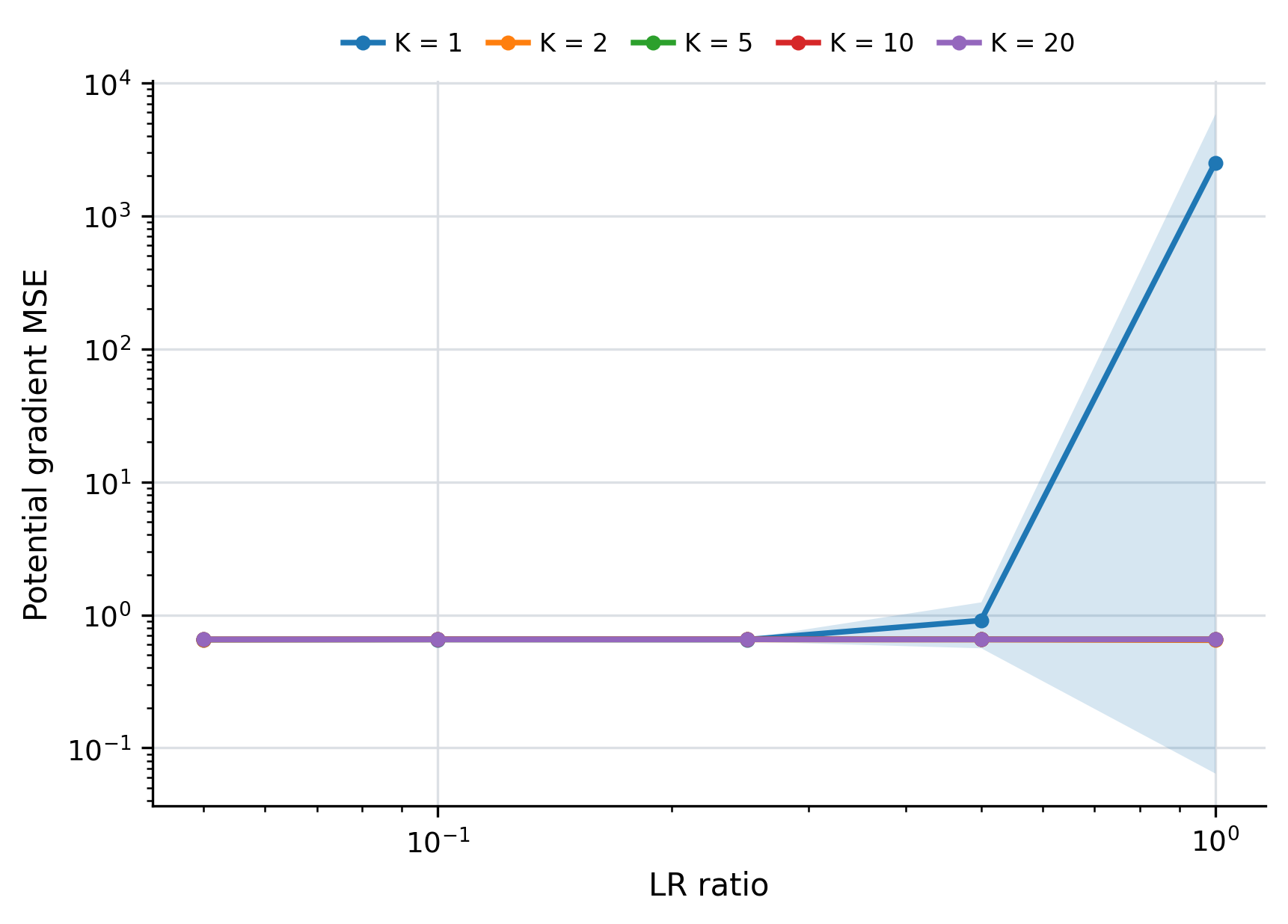}
        \caption{Potential gradient error.}
        \label{fig:grad_error}
    \end{subfigure}\\

    \begin{subfigure}{0.48\linewidth}
        \centering
        \includegraphics[width=\linewidth]{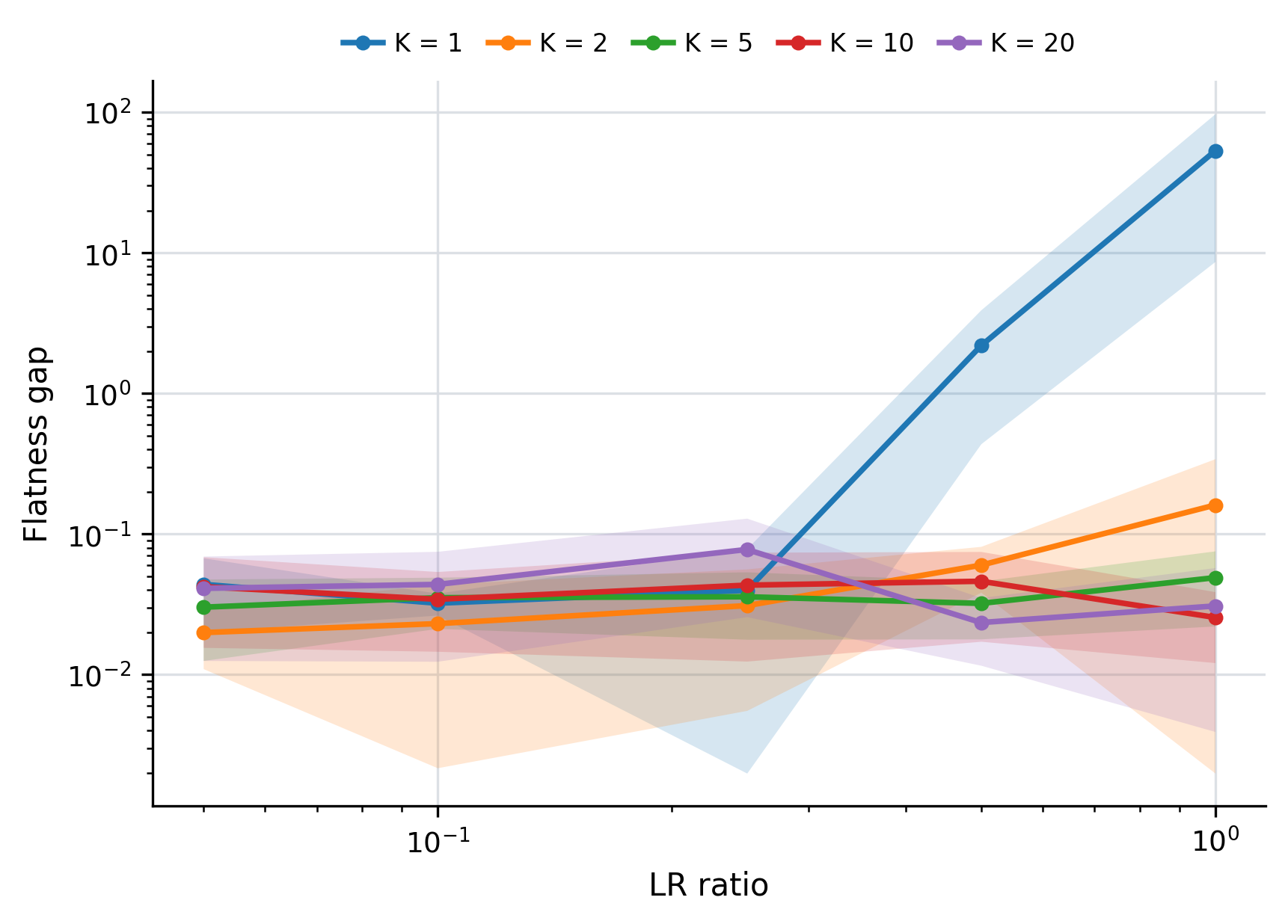}
        \caption{Flatness.}
        \label{fig:map_error}
    \end{subfigure}
    \hfill
    \begin{subfigure}{0.48\linewidth}
        \centering
        \includegraphics[width=\linewidth]{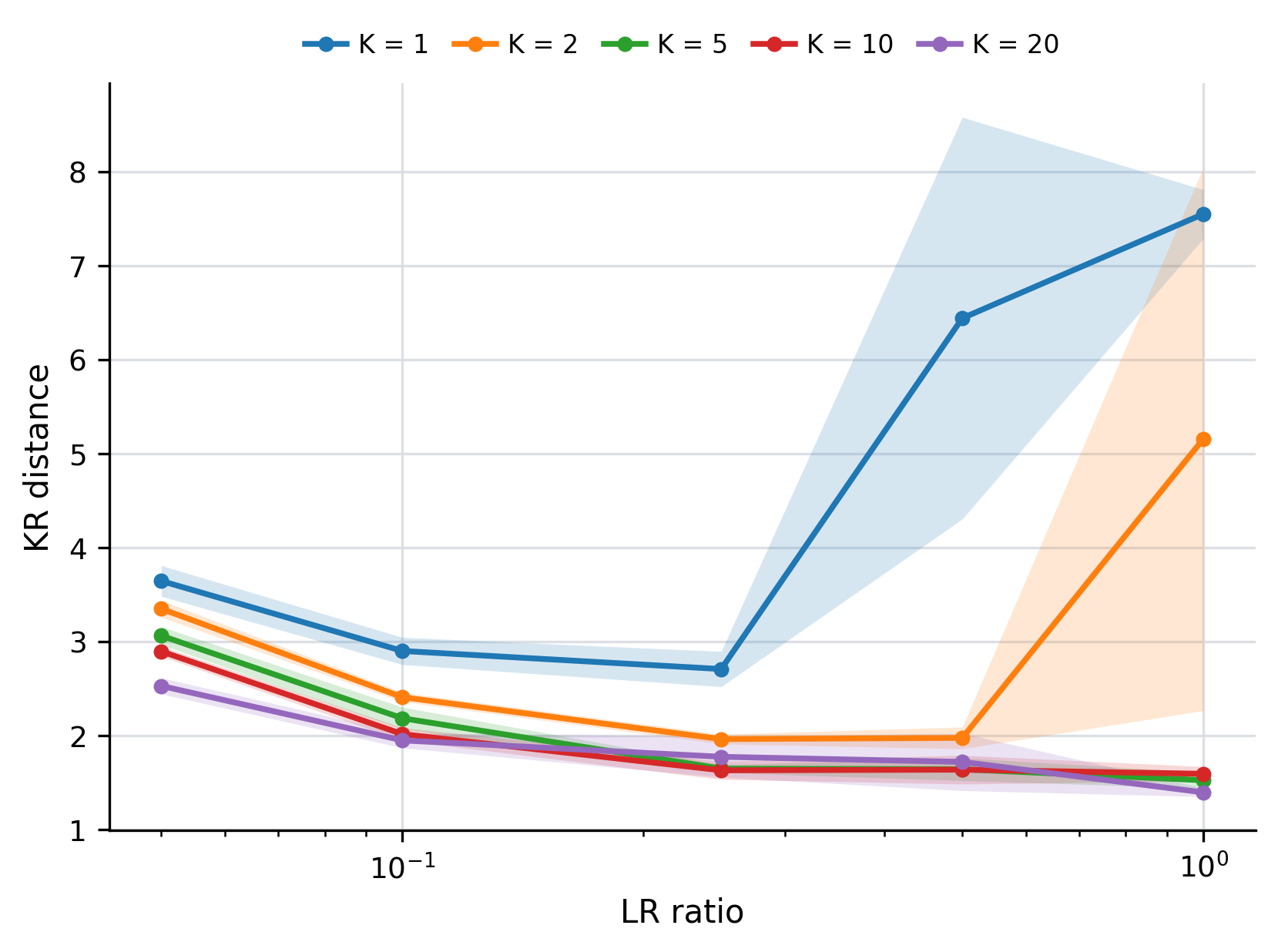}
        \caption{$d_{KR}$.}
        \label{fig:grad_error}
    \end{subfigure}

    \caption{Convergence behavior of the transport map and potential for the Monge Map method.}
    \label{fig:errors_mm}
\end{figure}

\subsection{Max Correlation}
MaxCorr uses the maximum-correlation form of quadratic OT:
\begin{equation}
    \langle x,T(x)\rangle-g(T(x))+\mathbb{E}_{y\sim\nu} g(y).
\end{equation}
It uses a direct MLP map and DenseICNN target potential, and is the unregularized
max-correlation ablation of the OTM-style objective
\citep{rout2021generative,tarasov2025statistical}. The results are shown in Figure~\ref{fig:errors_mc}.

\begin{figure}[h]
    \centering

    \begin{subfigure}{0.48\linewidth}
        \centering
        \includegraphics[width=\linewidth]{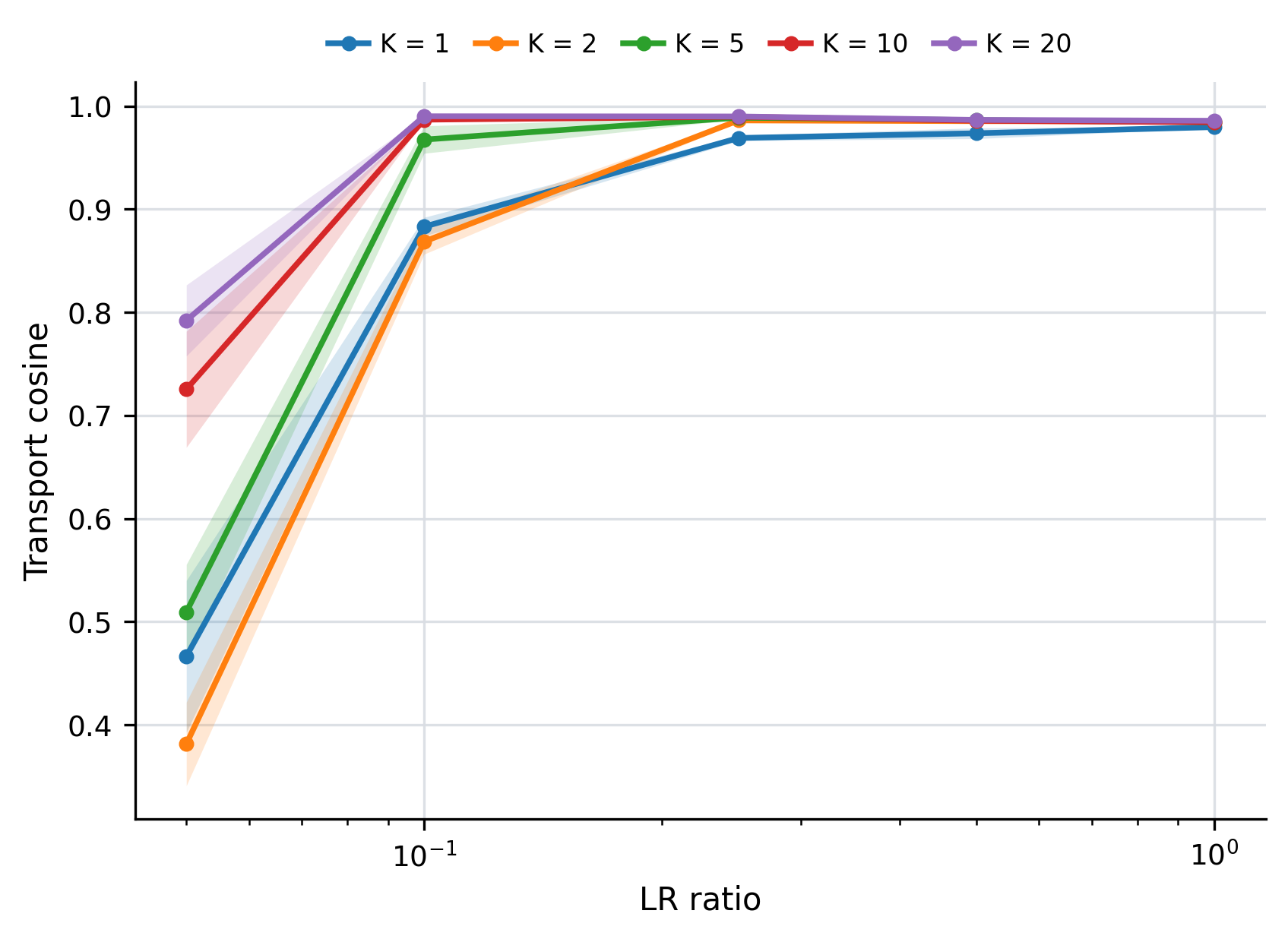}
        \caption{Map cosine similarity.}
        \label{fig:map_error}
    \end{subfigure}
    \hfill
    \begin{subfigure}{0.48\linewidth}
        \centering
        \includegraphics[width=\linewidth]{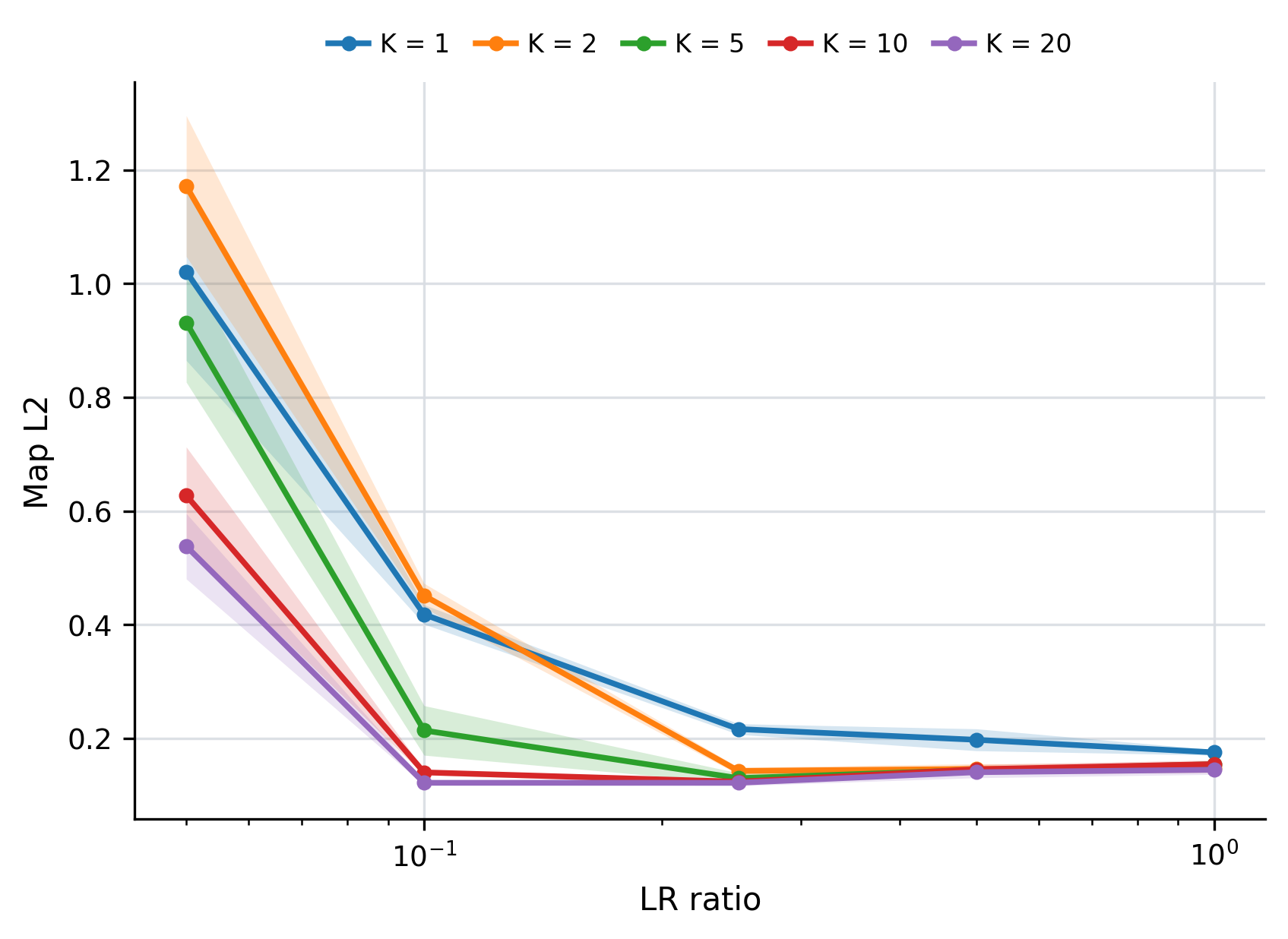}
        \caption{Map error.}
        \label{fig:grad_error}
    \end{subfigure}\\

    \begin{subfigure}{0.48\linewidth}
        \centering
        \includegraphics[width=\linewidth]{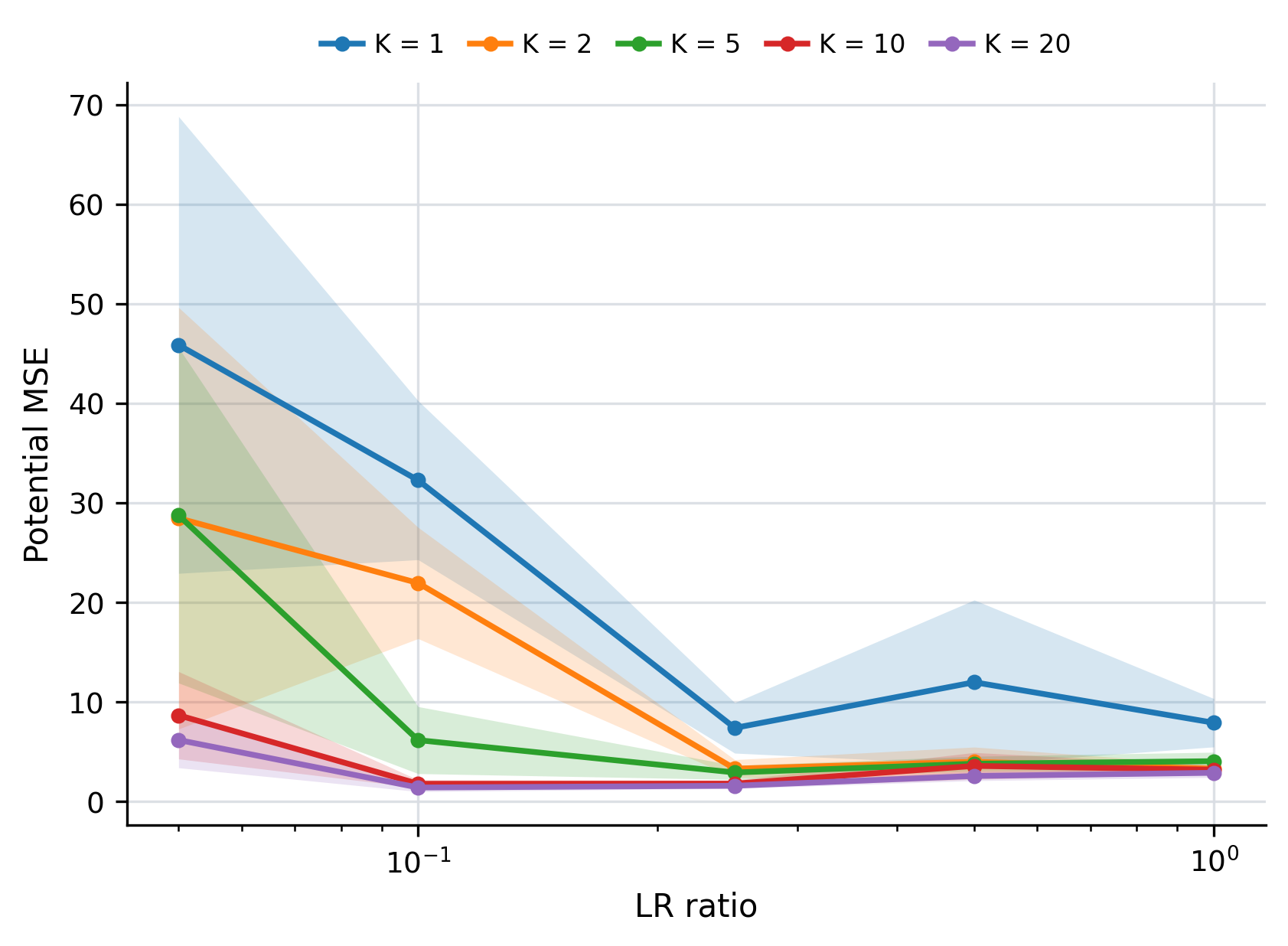}
        \caption{Potential error.}
        \label{fig:map_error}
    \end{subfigure}
    \hfill
    \begin{subfigure}{0.48\linewidth}
        \centering
        \includegraphics[width=\linewidth]{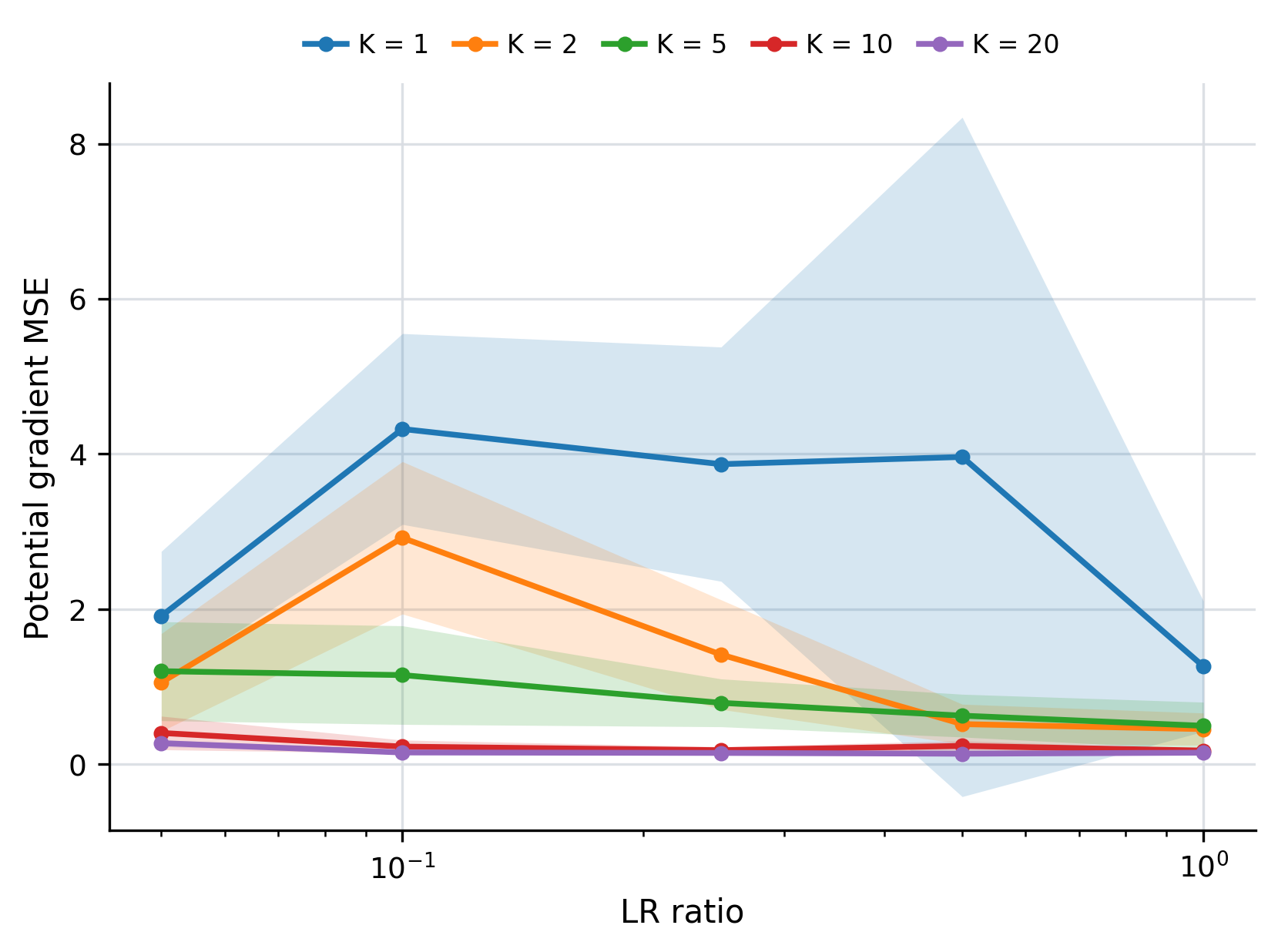}
        \caption{Potential gradient error.}
        \label{fig:grad_error}
    \end{subfigure}\\

    \begin{subfigure}{0.48\linewidth}
        \centering
        \includegraphics[width=\linewidth]{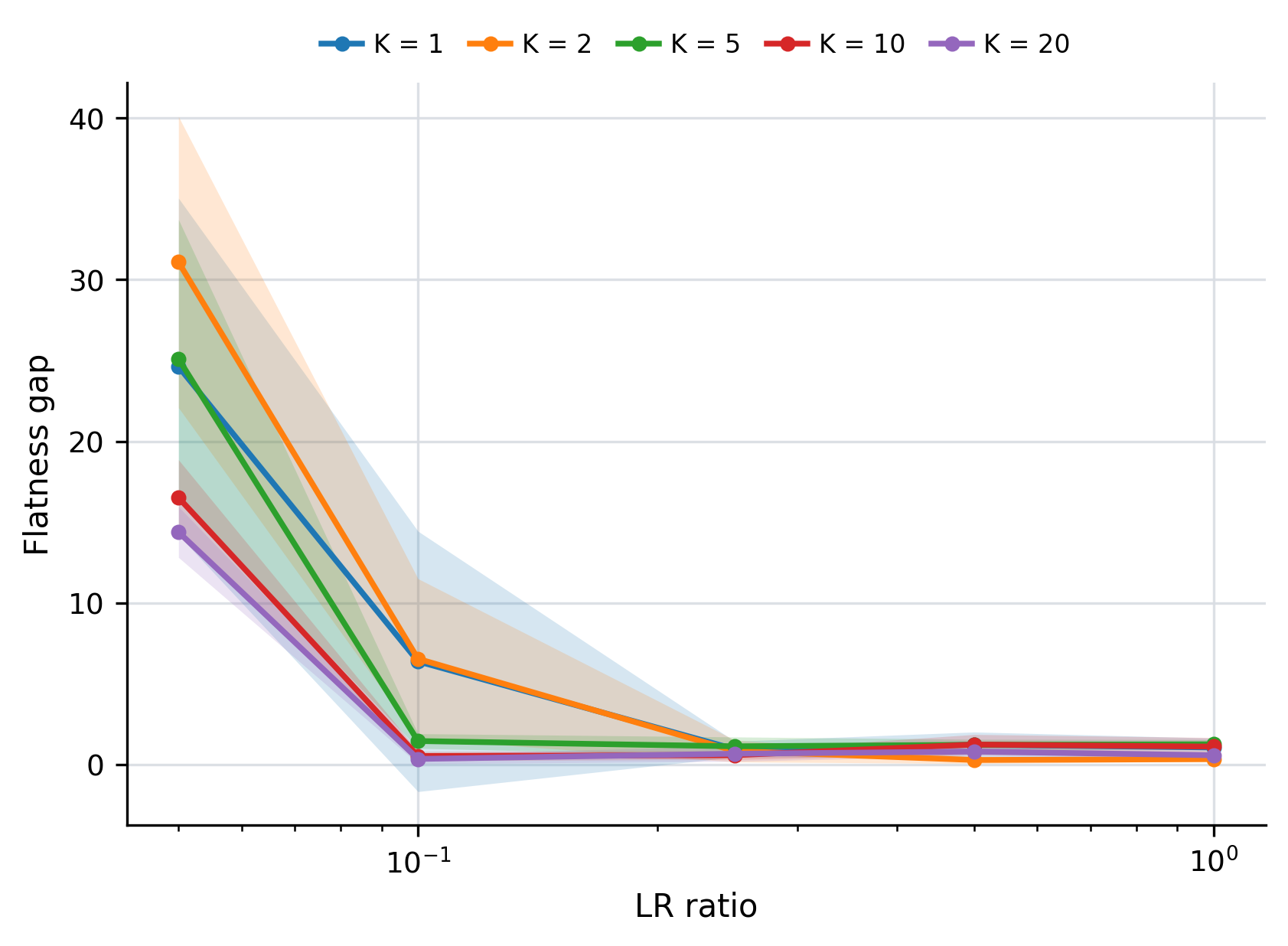}
        \caption{Flatness.}
        \label{fig:map_error}
    \end{subfigure}
    \hfill
    \begin{subfigure}{0.48\linewidth}
        \centering
        \includegraphics[width=\linewidth]{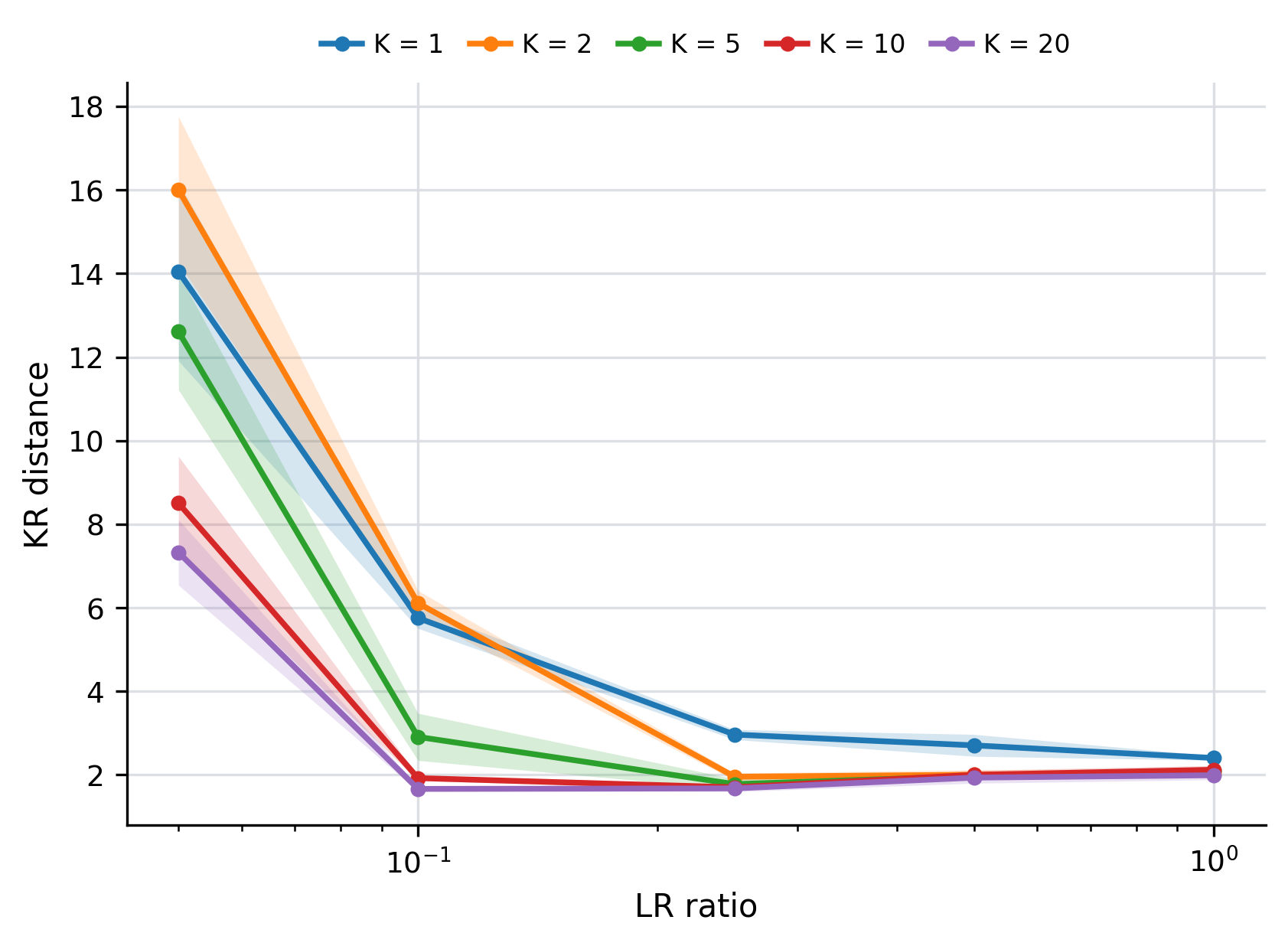}
        \caption{$d_{KR}$.}
        \label{fig:grad_error}
    \end{subfigure}

    \caption{Convergence behavior of the transport map and potential for the Max Correlation method.}
    \label{fig:errors_mc}
\end{figure}

\subsection{OTM}
OTM uses the same max-correlation objective, but adds the published
gradient-optimality penalty from optimal transport modeling
\citep{rout2021generative}. In our setup this penalty has weight $0.1$. The results are shown in Figure~\ref{fig:errors_otm}.

\begin{figure}[h]
    \centering

    \begin{subfigure}{0.48\linewidth}
        \centering
        \includegraphics[width=\linewidth]{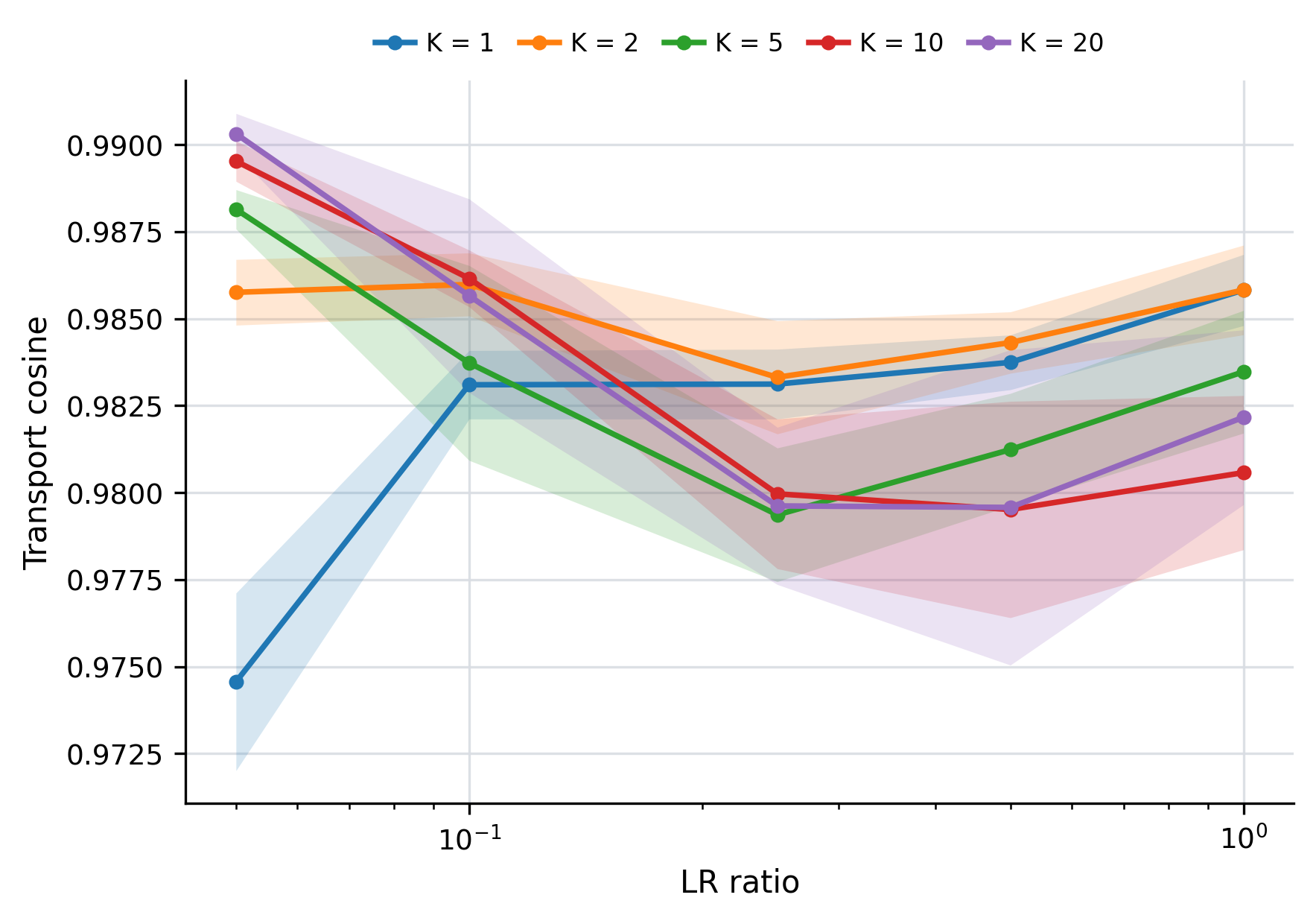}
        \caption{Map cosine similarity.}
        \label{fig:map_error}
    \end{subfigure}
    \hfill
    \begin{subfigure}{0.48\linewidth}
        \centering
        \includegraphics[width=\linewidth]{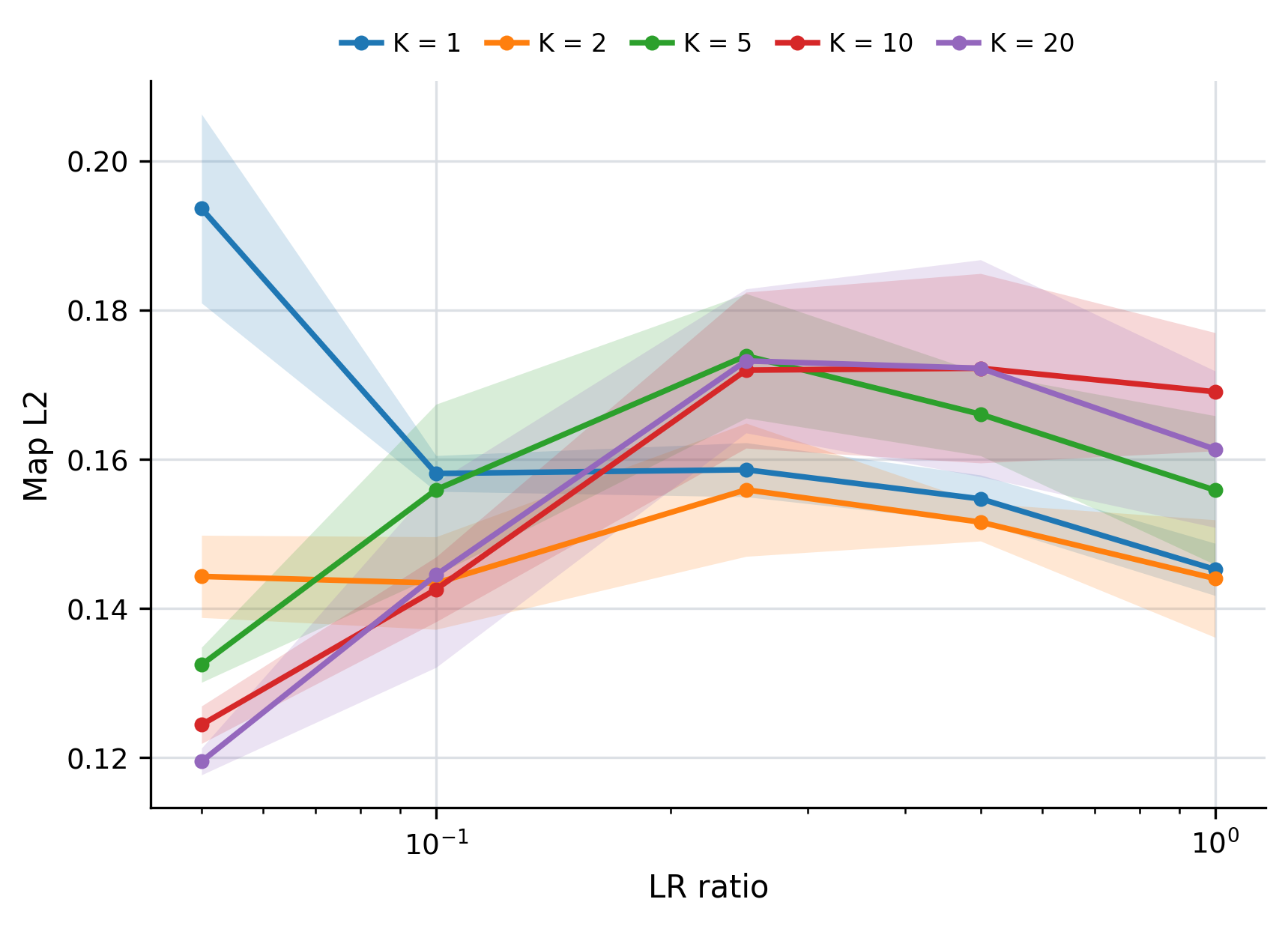}
        \caption{Map error.}
        \label{fig:grad_error}
    \end{subfigure}\\

    \begin{subfigure}{0.48\linewidth}
        \centering
        \includegraphics[width=\linewidth]{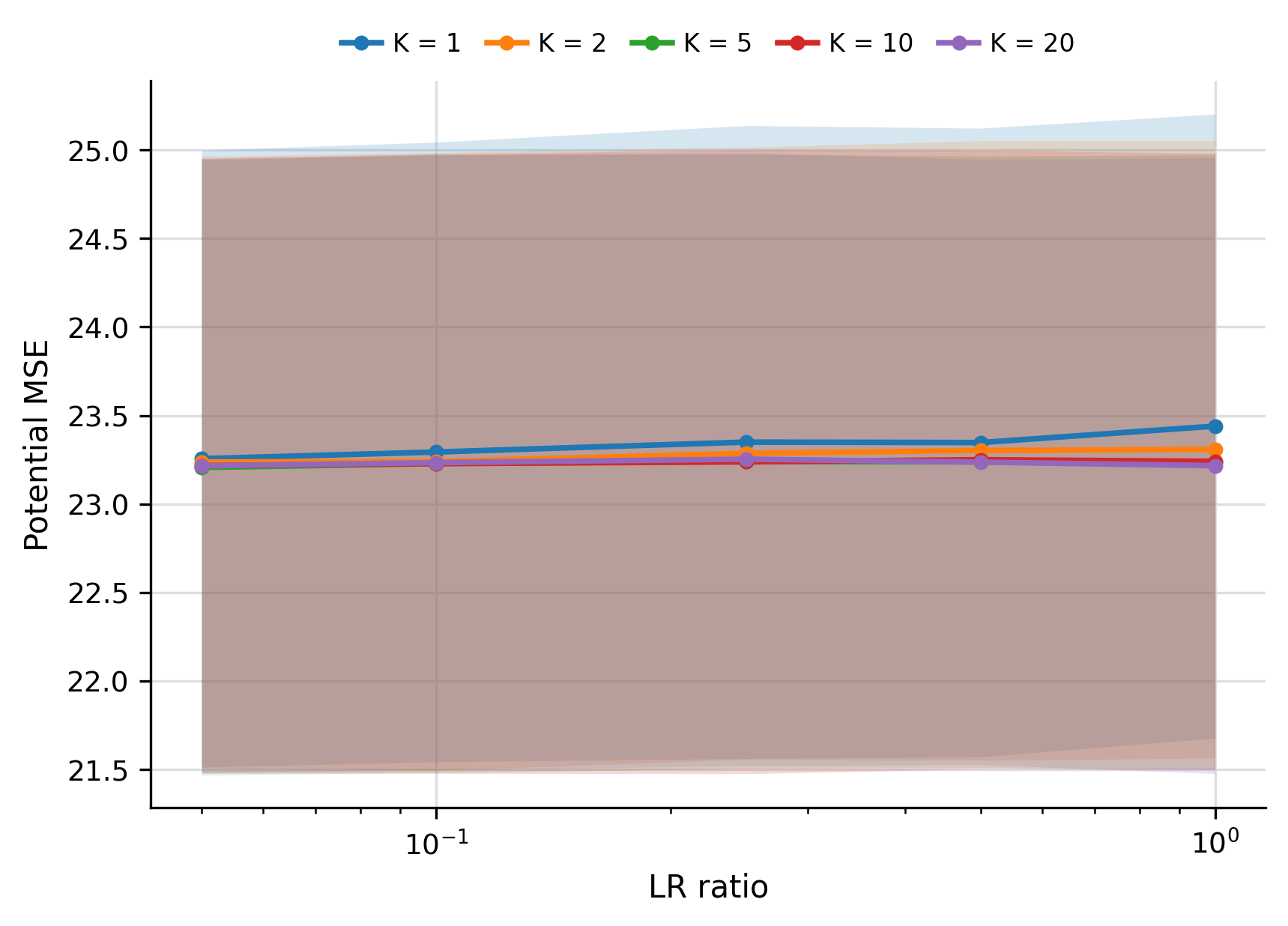}
        \caption{Potential error.}
        \label{fig:map_error}
    \end{subfigure}
    \hfill
    \begin{subfigure}{0.48\linewidth}
        \centering
        \includegraphics[width=\linewidth]{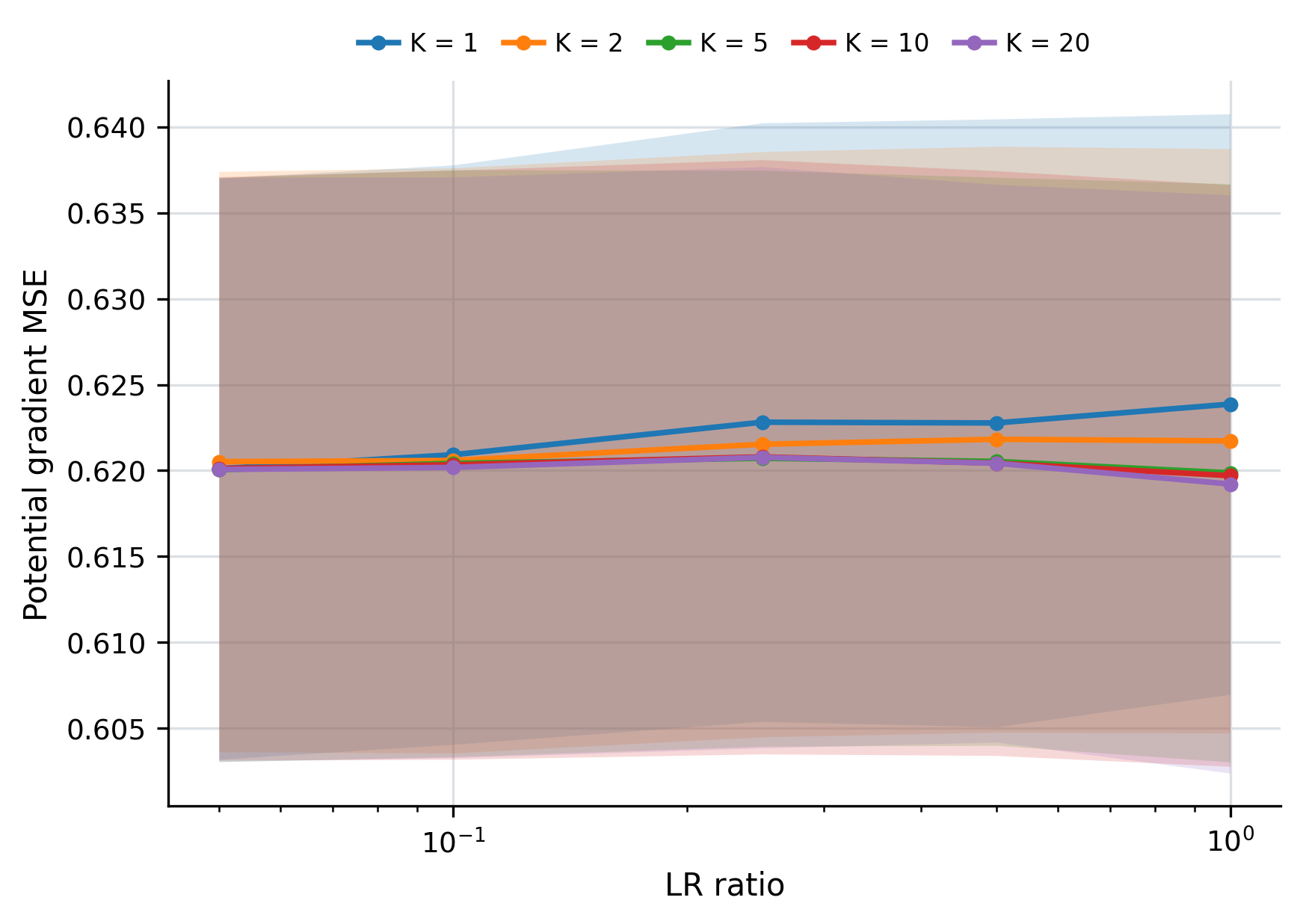}
        \caption{Potential gradient error.}
        \label{fig:grad_error}
    \end{subfigure}\\

    \begin{subfigure}{0.48\linewidth}
        \centering
        \includegraphics[width=\linewidth]{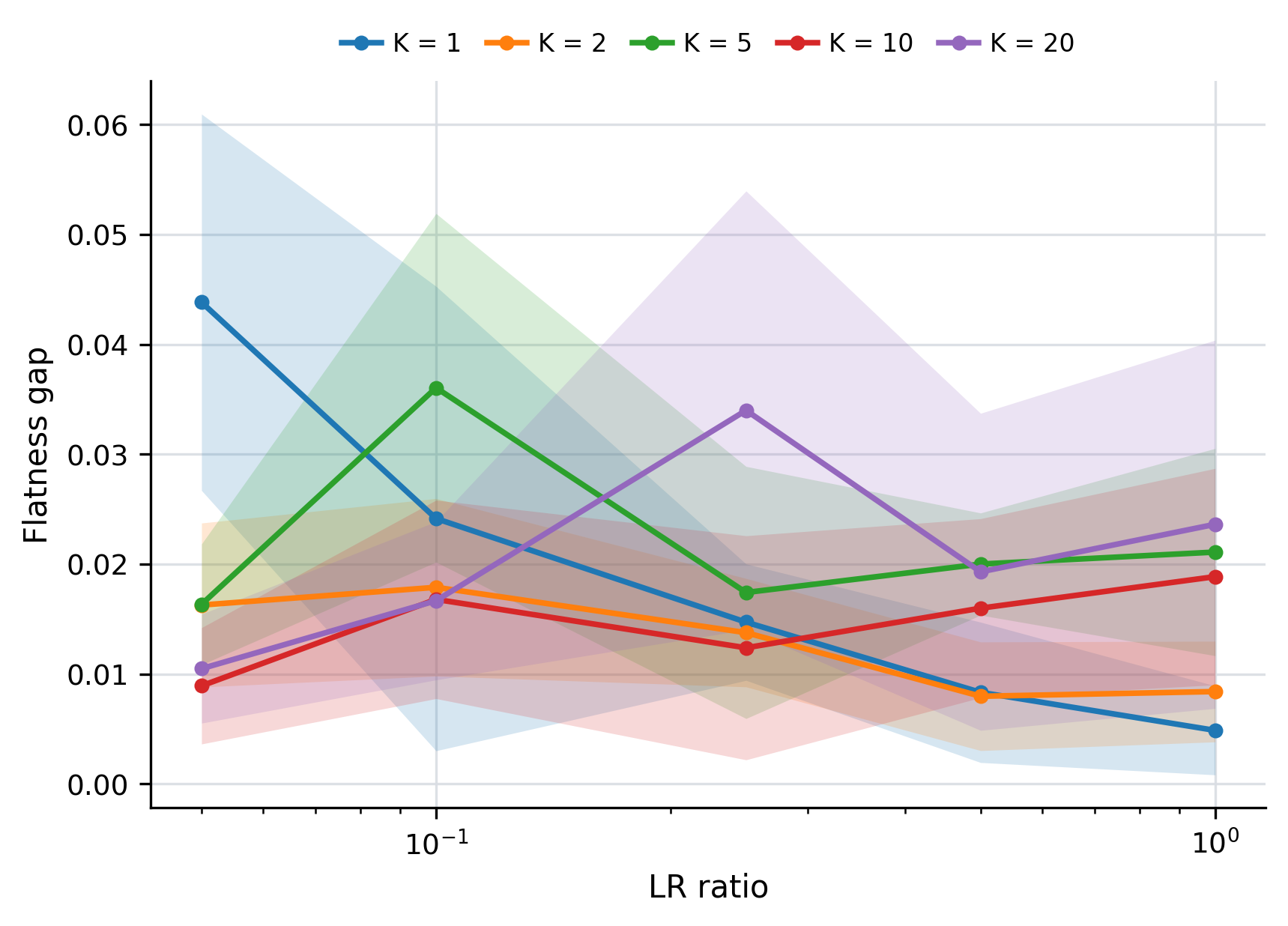}
        \caption{Flatness.}
        \label{fig:map_error}
    \end{subfigure}
    \hfill
    \begin{subfigure}{0.48\linewidth}
        \centering
        \includegraphics[width=\linewidth]{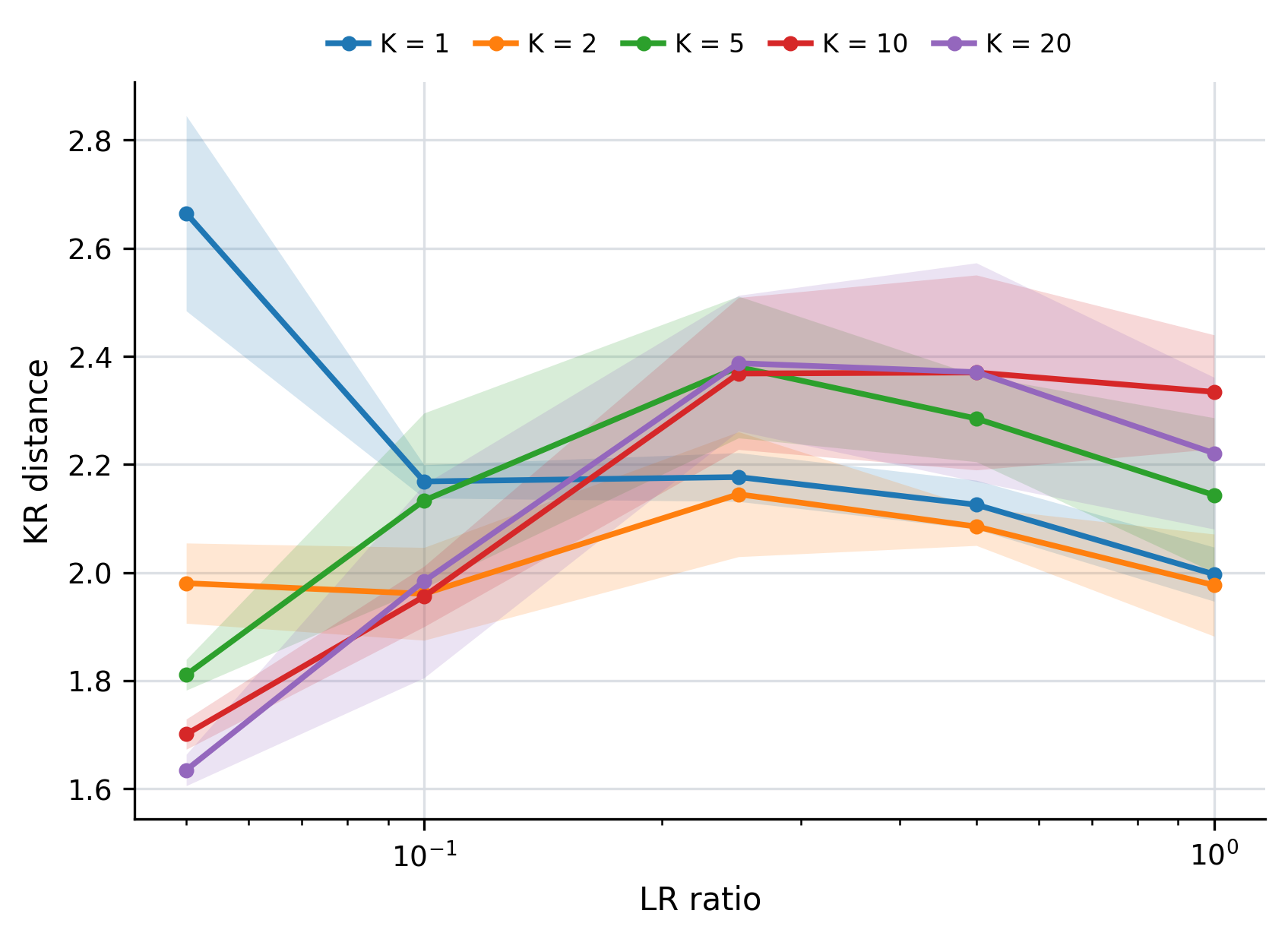}
        \caption{$d_{KR}$.}
        \label{fig:grad_error}
    \end{subfigure}

    \caption{Convergence behavior of the transport map and potential for the OTM method.}
    \label{fig:errors_otm}
\end{figure}

\end{document}